\newtheorem{theorem}{Theorem}[section]
\newtheorem{lemma}[theorem]{Lemma}
\newtheorem{lemma*}[theorem]{Lemma}
\newtheorem{corollary}[theorem]{Corollary}
\newenvironment{proof}[1][Proof]{\begin{trivlist}
\item[\hskip \labelsep {\bfseries #1}]}{\end{trivlist}}
\newcommand{\qed}{\nobreak \ifvmode \relax \else
      \ifdim\lastskip<1.5em \hskip-\lastskip
      \hskip1.5em plus0em minus0.5em \fi \nobreak
      \vrule height0.75em width0.5em depth0.25em\fi}
\newcommand{\ric}{\text{Ric}}
\newcommand{\fut}{\text{Fut}}
\newcommand{\kahler}{\text{K\"ahler }}
\DeclareSymbolFont{AMSb}{U}{msb}{m}{n}
\DeclareMathSymbol{\N}{\mathbin}{AMSb}{"4E}
\DeclareMathSymbol{\Z}{\mathbin}{AMSb}{"5A}
\DeclareMathSymbol{\R}{\mathbin}{AMSb}{"52}
\DeclareMathSymbol{\Q}{\mathbin}{AMSb}{"51}
\DeclareMathSymbol{\I}{\mathbin}{AMSb}{"49}
\DeclareMathSymbol{\C}{\mathbin}{AMSb}{"43}
\begin{document}

\title{The Futaki Invariant of Kähler Blowups with Isolated Zeros via Localization}
\author{Luke Cherveny}

\maketitle

\begin{abstract}


We present an analytic proof of the relationship between the Calabi-Futaki invariant for a Kähler manifold relative to a holomorphic vector field with a nondegenerate zero and the corresponding invariant of its blowup at that zero, restricting to the case that zeros on the exceptional divisor are isolated.  This extends the results of Li and Shi \cite{lishi:2015} for Kähler surfaces.  We also clarify a hypothesis regarding the normal form of the vector field near its zero.  An algebro-geometric proof was given by Székelyhidi \cite{szekelyhidi:2015} by reducing the situation to the case of projective manifolds for rational data and using Donaldson-Futaki invariants.  Our proof will be an application of degenerate localization.
\end{abstract}

\section{Introduction}

Let $M$ be a compact Kähler manifold with Kähler metric $\omega$.  A fundamental question in K\"ahler geometry asks whether the class $[\omega]$ contains a canonical Kähler metric.  When the first Chern class $c_1(M)$ is zero, celebrated work of Yau \cite{yau:1978} established the existence of a unique Ricci-flat K\"ahler metric in every K\"ahler class, while in the case of negative first Chern class, Yau \cite{yau:1978} and Aubin \cite{aubin:1976} independently proved existence and uniqueness of a Kähler-Einstein metric in $c_1(M)$.  The existence of K\"ahler-Einstein metrics when $c_1(M) > 0$ has recently been addressed by Chen-Donaldson-Sun \cite{cds:2015}.

More generally, one could ask whether a K\"ahler class $\Omega \in H^{1,1}(M,\R)$ contains a constant scalar curvature Kähler (cscK) metric.  The question is quite subtle and conjecturally related to the algebro-geometric stability of $M$; see \cite{szekelyhidibook} for a survey and references.  One obstruction to the existence of a cscK metric is a generalization due to Calabi \cite{calabi:1985} of Futaki's famous obstruction to K\"ahler-Einstein metrics first defined in \cite{futaki:1983}.  For any K\"ahler class $\Omega$, this \emph{Calabi-Futaki invariant} is a certain character on the Lie algebra of holomorphic vector fields $\mathfrak{h}$

\[
\fut(\Omega, \cdot): \mathfrak{h} \rightarrow \C.
\]

\noindent whose vanishing on $\mathfrak{h}$ is necessary for $\Omega$ to support a cscK metric.

%
%
%
%

One approach to calculating the Calabi-Futaki invariant, at least when $M$ is algebraic, is via the algebraic Donaldson-Futaki invariant \cite{donaldson:2005}.  Another method is localization, which will be our approach in this paper.  When the zero locus of $X$ is nondegenerate, Tian \cite{tian:1996} gave a complete formula reducing $\fut(\Omega, X)$ to a calculation on $\text{Zero}(X)$.  When the zero locus of $X$ is degenerate, localization calculations are quite difficult.  See section 2 for details.

In this paper we study the following situation where degenerate localization calculations naturally arise: With $M$ as above, suppose that $p\in M$ is a zero of $X \in \mathfrak{h}$.  The blowup $\pi: \tilde M \rightarrow M$ at $p$ then admits a holomorphic lift $\tilde X$ of $X$ as well as a natural K\"ahler class

\[
\tilde \Omega = \pi^*\Omega - \epsilon [E],
\]
 
\noindent where $E$ is the exceptional divisor and $\epsilon$ is sufficiently small \cite{griffithsharris}.  A natural question is, what is the relationship between $\fut(\Omega,X)$ and $\fut(\tilde \Omega, \tilde X)$?

We will limit ourselves to the following assumption on $X$ at $p$:

\begin{quote}
($\star$) The Jordan canonical form of the linearization $DX$ at $p$ does not contain multiple Jordan blocks for the same eigenvalue.
\end{quote}

\noindent Geometrically, ($\star$) means $\tilde X$ does not contain a positive dimensional zero locus.

Our main theorem is 

\begin{theorem}\label{maintheorem} Let $\pi: \tilde M \rightarrow M$ be the blow-up of $n$-dimensional K\"ahler manifold $M$ at isolated non-degenerate zeros $\{p_1, \dots, p_k\}$ of a holomorphic vector field $X$ with zero-average holomorphy potential $\theta_X$. When ($\star$) holds, the Futaki invariant for $\tilde M$ with respect to the class $\tilde \Omega = \pi^*\Omega - \sum \epsilon_i [E_i]$ and the natural holomorphic extension $\tilde X$ of $X$ to $\tilde M$ satisfies

\[
\textrm{Fut}_{\tilde M}(\tilde \Omega, \tilde X) = \textrm{Fut}_M(\Omega, X) - \sum_{i = 1}^k n(n-1) \theta_X(p) \epsilon_i^{n-1} +  O(\epsilon^n)
\]

\end{theorem}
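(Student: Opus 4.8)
The plan is to evaluate both invariants with Tian's localization formula and subtract. Away from the exceptional locus, $\pi$ is a biholomorphism and $\tilde\Omega$ agrees with $\pi^*\Omega$, so the zeros of $\tilde X$ not lying on any $E_i$ are in bijection with the remaining zeros of $X$ and carry identical local data; their localization terms therefore coincide, up to the correction coming from the global zero-average normalization of $\theta_{\tilde X}$, which I would show is $O(\epsilon^n)$ since $\int_{\tilde M}\tilde\omega^n = \int_M\omega^n + O(\epsilon^n)$. Hence the entire difference localizes at each $p_i$: I must compare the single localization term at $p_i$ on $M$ with the combined terms at the new zeros created on $E_i$. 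Under $(\star)$ these new zeros are isolated and nondegenerate, being the $n$ points of $E_i = \mathbb{P}(T_{p_i}M)$ fixed by the projectivized flow, i.e. the eigenlines of $DX|_{p_i}$.

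Next I would pass to a chart around $p_i$ in which $X$ has the normal form permitted by the hypothesis, at leading order the linear field $X = \sum_m \lambda_m z_m\partial_{z_m}$, and use the standard blowup of $\mathbb{C}^n$ at the origin. A direct computation in the affine charts shows that at the fixed point $e_j$ the linearization $D\tilde X$ has the eigenvalue $\lambda_j$ normal to $E_i$ and the eigenvalues $\lambda_m - \lambda_j$ ($m\neq j$) tangent to $E_i$, whence $\det D\tilde X|_{e_j} = \lambda_j\prod_{m\neq j}(\lambda_m-\lambda_j)$. Since $\tilde X$ projects to $X$, the holomorphy potential satisfies $\theta_{\tilde X} = \pi^*\theta_X$ off the exceptional locus, and on $E_i$ it picks up the Fubini--Study moment map of $\tilde X|_{E_i}$ for the scaled form $\epsilon_i\omega_{FS}$; at $e_j$ this yields an expansion $\theta_{\tilde X}(e_j) = \theta_X(p_i) + \epsilon_i\lambda_j + O(\epsilon_i^2)$. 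I would likewise record the $\epsilon_i$-dependence of the equivariant K\"ahler and Ricci data at $e_j$ induced by the twist $-\epsilon_i[E_i]$.

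Feeding this into the localization formula, the combined $E_i$-term becomes a sum $\sum_{j=1}^n N(e_j)/\big(\lambda_j\prod_{m\neq j}(\lambda_m-\lambda_j)\big)$ whose summand is, up to sign, the residue at $t=\lambda_j$ of a single rational function $R(t)=\hat N(t)/\big(t\prod_m(t-\lambda_m)\big)$, while the $p_i$-term is the residue of the same $R$ at $t=0$. By the residue theorem the difference then equals $-\operatorname{Res}_{t=\infty}R$, which depends only on the top-order behavior of $\hat N$ in $t$ and is computable in closed form. The $\epsilon_i$-expansion of $\operatorname{Res}_\infty R$ must be matched against the claim: the orders $\epsilon_i^0$ through $\epsilon_i^{n-2}$ have to vanish — the $\epsilon_i^0$ order by cohomological invariance under $\pi^*$ (as $\tilde\Omega\to\pi^*\Omega$ the invariant is unchanged), the intermediate orders by the divided-difference identities $\sum_j f(\lambda_j)/\prod_{m\neq j}(\lambda_m-\lambda_j) = 0$ for $\deg f \le n-2$ — leaving the order-$\epsilon_i^{n-1}$ coefficient, in which the eigenvalue dependence cancels to produce exactly $-n(n-1)\theta_X(p_i)$. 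Summing over $i$ and collecting the remainder as $O(\epsilon^n)$ finishes the proof.

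The main obstacle is this final residue-at-infinity expansion: it requires the exact numerator $\hat N(t)$ of Tian's integrand on the blowup — the equivariant scalar-curvature and first-Chern data at the new fixed points together with their full $\epsilon_i$-dependence — and then a delicate verification that the eigenvalues $\lambda_m$ drop out of the surviving coefficient, leaving the universal constant $n(n-1)$. Establishing the vanishing of all intermediate orders $\epsilon_i^1,\dots,\epsilon_i^{n-2}$ is the most labor-intensive step, and it is precisely there that the Lagrange-interpolation/residue identities do the real work. A secondary technical point is the normal-form reduction when $DX|_{p_i}$ has a nontrivial Jordan block (allowed by $(\star)$ as a single block per eigenvalue): I would handle this either by repeating the chart computation in Jordan form or, since the final coefficient is independent of the $\lambda_m$, by specializing the diagonalizable answer by continuity.
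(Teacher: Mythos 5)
Your outline is correct only in the sub-case where $DX|_{p_i}$ is diagonalizable with pairwise distinct eigenvalues, and there it does parallel the paper: your decomposition into local contributions is Lemma \ref{contributionslemma} (due to Li--Shi), your reduction to a single identity is Lemma \ref{toprovelemma}, and your residue-theorem/divided-difference mechanism is exactly the role of Lemma \ref{lemmagk}, which the paper also proves by integrating meromorphic differentials over the Riemann sphere. The genuine gap is your assertion that under ($\star$) the new zeros on $E_i$ are ``isolated and nondegenerate, being the $n$ points\dots fixed by the projectivized flow.'' Hypothesis ($\star$) allows nontrivial Jordan blocks (at most one per eigenvalue); a block of size $n_j>1$ produces a \emph{single} fixed point $q_j$ on the exceptional divisor at which $D\tilde X_{q_j}$ has $0$ as an eigenvalue of algebraic multiplicity $n_j-1$, i.e.\ an isolated \emph{degenerate} zero, and there are only $m$ fixed points (one per block), not $n$. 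Bott--Tian nondegenerate localization simply does not apply at such points. Handling them is the entire technical content of the paper: one must invoke degenerate localization (Theorem \ref{degeneratelocalize}), construct explicitly a matrix $B$ of holomorphic functions with $u_j^{\alpha_j} = \sum_i b_{ij}\tilde X_i$, and compute the resulting residues (Lemma \ref{maxdegenlemma} and its multi-block generalization (\ref{generalresidue})), which is where the partition sums and binomial factors in $G_k$ originate.

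Your proposed fallback --- ``specializing the diagonalizable answer by continuity'' --- does not close this gap. A perturbation of $DX|_{p_i}$ to distinct eigenvalues does not in general extend to a globally holomorphic vector field on the compact manifold $M$ (the Lie algebra $\mathfrak{h}$ is finite-dimensional), so for a perturbed linearization the ``local contributions'' are not terms of any global localization identity; and as the eigenvalues collide, the fixed points on $E_i$ merge and the individual Bott terms blow up, so the claim that their sum converges to the transgression-defined local invariant of the degenerate zero is itself a degenerate-localization statement --- essentially the content of Theorem \ref{degeneratelocalize} --- not something that follows from continuity of both sides. A secondary but real problem: your claim that the correction from the zero-average renormalization is $O(\epsilon^n)$ is false as stated. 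One has $\delta = \tilde\mu - \mu = O(\epsilon^{n-1})$ (from $c_1(\tilde M) = \pi^*c_1(M)-(n-1)[E]$ and $\int_{\tilde M}[E]^n = (-1)^{n-1}$), and the induced correction $\frac{n\delta}{n+1}J_p$ enters at exactly the order $\epsilon^{n-1}$ of the theorem; the paper keeps this term and it cancels against the matching $\tilde\mu$-term in the exceptional-divisor contribution, so discarding it as negligible would corrupt the coefficient $n(n-1)\theta_X(p)$.
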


Li and Shi established the result for K\"ahler surfaces \cite{lishi:2015}.  In fact, they addressed when $p$ is not isolated or when the zero locus of $\tilde X$ is $E \cong \C P^1$, which are necessarily nondegenerate situations for surfaces but would be quite formidable more generally.  An algebro-geometric proof was given by Székelyhidi \cite{szekelyhidi:2015} by reducing the situation to the case of projective manifolds for rational data and using Donaldson-Futaki invariants, with related results recently appearing in the work of Dervan-Ross \cite{dervanross:2017} and Dyrefelt \cite{dyrefelt:2016}.  We give an analytic proof via localization, for which the $n=2$ case of Li and Shi is a special case.

One application of Theorem \ref{maintheorem} is to show the nonexistence of cscK metrics on certain blow-ups:

\begin{corollary}
If a K\"ahler manifold $M$ admits a constant scalar curvature metric $\omega \in \Omega$ and there exists a holomorphic vector field $X$ on $M$ vanishing at $p$ and such that $\theta_X(p) \neq 0$, then the blowup $\tilde M$ at $p$ does not admit cscK metrics in any class $\pi^*\Omega - \epsilon [E]$ for small $\epsilon$.
\end{corollary}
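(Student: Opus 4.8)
The plan is to read the corollary directly off Theorem \ref{maintheorem}, specialized to a single blow-up point ($k=1$), combined with the classical fact that the Calabi-Futaki invariant obstructs cscK metrics. First I would record that because $\omega \in \Omega$ is cscK, its scalar curvature $S_\omega$ is constant and hence equal to its average $\bar S$. Writing the Calabi-Futaki invariant in its standard integral form, $\fut_M(\Omega, X) = c\int_M \theta_X\,(S_\omega - \bar S)\,\omega^n$ for a dimensional constant $c$, the integrand vanishes identically, so $\fut_M(\Omega, X) = 0$. Since $\fut_M(\Omega, \cdot)$ depends only on the class $\Omega$ and not on the chosen representative metric, this vanishing holds for the given $X$.

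Substituting $\fut_M(\Omega, X) = 0$ into the expansion of Theorem \ref{maintheorem} yields
\[
\fut_{\tilde M}(\tilde \Omega, \tilde X) = -\,n(n-1)\,\theta_X(p)\,\epsilon^{n-1} + O(\epsilon^n).
\]
Because $n \geq 2$ and $\theta_X(p) \neq 0$ by hypothesis, the leading coefficient $-n(n-1)\theta_X(p)$ is a fixed nonzero constant, so for all sufficiently small $\epsilon > 0$ the $O(\epsilon^n)$ remainder cannot cancel the $\epsilon^{n-1}$ term, whence $\fut_{\tilde M}(\tilde\Omega, \tilde X) \neq 0$. As $\tilde X$ is a genuine element of $\mathfrak{h}(\tilde M)$ (the holomorphic lift of $X$, which fixes $p$), the nonvanishing of the Calabi-Futaki character on $\mathfrak{h}(\tilde M)$ violates the necessary condition for existence of a cscK metric, so no cscK metric exists in $\tilde\Omega = \pi^*\Omega - \epsilon[E]$.

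The asymptotic comparison itself is elementary, so the real content — and the only place requiring care — is in verifying that Theorem \ref{maintheorem} applies: one needs $p$ to be an isolated nondegenerate zero of $X$ satisfying $(\star)$, which I would carry as a standing hypothesis inherited from the theorem, and one needs the lift $\tilde X$ to be holomorphic and nontrivial so that the Futaki character is evaluated on an actual nonzero element of $\mathfrak{h}(\tilde M)$. Granting those, no further estimates are needed; all of the analytic difficulty has been absorbed into the theorem, and the corollary is a one-line consequence of its leading-order term.
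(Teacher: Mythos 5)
Your proof is correct and is exactly the argument the paper intends: the corollary is stated as an immediate application of Theorem \ref{maintheorem}, and your route --- a cscK metric in $\Omega$ forces $\fut_M(\Omega,X)=0$ by metric-independence of the invariant, whence the theorem's expansion gives $\fut_{\tilde M}(\tilde\Omega,\tilde X)=-n(n-1)\theta_X(p)\epsilon^{n-1}+O(\epsilon^n)\neq 0$ for all sufficiently small $\epsilon>0$, violating the necessary vanishing condition --- is precisely that application. Your closing caveat is also the right one: the theorem's standing hypotheses (isolated nondegenerate zero, $(\star)$, and the zero-average normalization of $\theta_X$, without which the condition $\theta_X(p)\neq 0$ is not even well-posed) must be read into the corollary.
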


The organization of this paper is as follows. Section 2 provides background concerning the Futaki invariant, summarizes Tian's application of Bott localization to its calculation, and sets up the blowup calculation.  Section 3 gives a proof of Theorem \ref{maintheorem} in the special case that the linearization of $X$ at $p$ has a single Jordan block.  It relies on Lemma \ref{maxdegenlemma}, which generalizes the main calculation of \cite{lishi:2015}.  Section 4 addresses the general case of multiple Jordan blocks with distinct eigenvalues.  Section 5 discusses the normal forms of a holomorphic vector field $X$ about a singular point and establishes that for the purposes of Theorem \ref{maintheorem}, the simplifying assumption that $X$ is locally biholomorphic to its linearization as in Section 4 is sufficient.  Section 6 is an appendix that contains a proof of Lemma \ref{lemmagk} used to sum localization calculations.

\section{Background}

A reference for much of this section is \cite{tian:1996}; to align conventions, we too define the \kahler form $\omega$ and Ricci form $\ric$ without the usual $\sqrt{-1}$, and let the \kahler class be 

\[
\Omega = \left[\frac{\sqrt{-1}}{2\pi}\omega\right] \in H^{1,1}(M,\R)
\]

We first recall the definition of the Calabi-Futaki invariant.  Let $(M,\omega)$ be a compact \kahler manifold and $F$ the smooth function uniquely determined by

\[
S(\omega) - \bar S = \Delta F \qquad \qquad \int_M (e^F - 1)\omega^n = 0
\]

\noindent where $S(\omega)$ is scalar curvature, $\Delta$ is the Laplacian with respect to $\omega$, and
\[
\bar S = \frac{\int S(\omega) \Omega^n}{\int \Omega^n} = \frac{n \int c_1(M)\cup \Omega^{n-1}}{\int \Omega^n}
\]

\noindent is the average scalar curvature.

The \emph{Calabi-Futaki invariant} is defined for each K\"ahler class $\Omega$ to be a functional on the Lie algebra of holomorphic vector fields $\mathfrak{h}$

\[
\fut(\Omega, \cdot): \mathfrak{h} \rightarrow \C
\]

\noindent given by

\[
\fut(\Omega, X) = \left( \frac{\sqrt{-1}}{2\pi} \right)^n \int_M X(F) \; \omega^n
\]

Futaki \cite{futaki:1988} and Calabi \cite{calabi:1985} showed $\fut(\Omega, \cdot)$ is a Lie algebra character and that the definition is in fact independent of the choice of metric $\omega$ in its K\"ahler class $\Omega$, justifying the notation and making its vanishing for all $X \in \mathfrak{h}$ necessary for $\Omega$ to contain a cscK metric.

Following Tian \cite{tian:1996}, we now explain the localization of $\fut(\Omega,X)$.  For every $X \in \mathfrak{h}$, Hodge theory provides a harmonic $(0,1)$-form $\alpha$ and a smooth function $\theta_X$, unique up to addition of a constant, such that

\begin{equation}\label{hodge}
i_X \omega = \alpha - \bar \partial \theta_X.
\end{equation}

\noindent Equivalently, $\theta_X$ is \emph{holomorphy potential} for $X$\footnote{Or rather, recalling that $\omega$ is defined without a $\sqrt{-1}$ on it, $\sqrt{-1}\theta$ is holomorphy potential in the sense that $f: M \rightarrow \C$ is holomorphy potential for holomorphic vector field $X = g^{i\bar j} (\partial_{\bar j} f) \partial_i$, i.e. $X$ is the $(1,0)$ part of the Riemannian gradient of $f$ (up to a factor of 2).}.  By applying $\bar \partial^*$ to both sides of (\ref{hodge}) and using integration by parts,

%


\begin{align}\label{dividebyn}
\fut(\Omega, X) &= \left( \frac{\sqrt{-1}}{2\pi}\right)^n\int_M (\Delta \theta_X) F \; \omega^n \nonumber\\
 &= \left( \frac{\sqrt{-1}}{2\pi}\right)^n\int_M  \theta_X \Delta F \; \omega^n \nonumber\\
 &= \left( \frac{\sqrt{-1}}{2\pi}\right)^n\int_M  \theta_X S \; \omega^n - \bar S \int_M  \theta_X \; \omega^n \nonumber\\
 &= \left( \frac{\sqrt{-1}}{2\pi}\right)^n\int_M  n\theta_X \ric \wedge \omega^{n-1} - \frac{\bar S}{n+1} \left( \frac{\sqrt{-1}}{2\pi}\right)^n \int_M  (\theta_X + \omega)^{n+1}
\end{align}

\noindent which expresses the Calabi-Futaki invariant without explicit reference to $F$.  This expression (\ref{dividebyn}) also shows we may assume $\alpha = 0$ in (\ref{hodge}) for our purposes.

Let $A = (-\Delta \theta_X + \ric)$ and $B = (\theta_X + \omega)$.  By using the identity

\begin{equation}\label{combinatorial}
\sum_{j=0}^l (-1)^j {l \choose j} (l-2j)^k = \begin{cases} 0 \qquad k < l \textrm{ or } k=l+1 \\ 2^l l! \qquad k = l \end{cases}
\end{equation}

\noindent one checks that

\begin{align*}
\sum_{j=0}^n (-1)^j &{n \choose j} \int_M (A+(n-2j)B)^{n+1} - (-A + (n-2j)B)^{n+1} \\
&= \sum_{j=0}^n (-1)^j {n \choose j} \int_M \sum_{k=0}^{n+1} {n+1 \choose k} \left[ A^{n+1-k}(n-2j)^kB^k  - (-A)^{n+1-k}(n-2j)^kB^k\right] \\
&= 2^nn! \int_M (n+1)2AB^n \hspace{1.5 in} \textrm{(only $k=n$ is non-zero)} \\
&= 2^{n+1}(n+1)! \int_M  \left[ n\theta_X \, \ric \wedge \omega^{n-1} -\Delta \theta_X \, \omega^n \right] \\
&= 2^{n+1}(n+1)! \int_M n\theta_X \, \ric \wedge \omega^{n-1}\\
\end{align*}

%

\noindent Dividing this expression by $n!$ and substituting into the previous calculation (\ref{dividebyn}) yields

\begin{align}\label{expandedfutaki}
2^{n+1}(n+1)\fut (X,\Omega) = 
&\sum_{j=0}^n \frac{(-1)^j }{j!(n-j)!} \left( \frac{\sqrt{-1}}{2\pi}\right)^n\int_M [(-\Delta \theta_X + \ric +(n-2j)(\theta_X + \omega) )^{n+1} \nonumber \\
& \hspace{1.8 in} - (\Delta \theta_X - \ric + (n-2j)(\theta_X + \omega) )^{n+1}] \nonumber \\
&- \bar S \sum_{j=0}^{n+1} (-1)^j \frac{(n+1-2j)^{n+1}}{j!(n+1-j)!}  \left( \frac{\sqrt{-1}}{2\pi}\right)^n\int_M(\theta_X + \omega)^{n+1}
\end{align}

\noindent The point of expressing $\fut(X,\Omega)$ in this manner is that, as we will see, each integral (for each fixed $j$) may be realized as a Futaki-Morita type invariant, to which holomorphic localization applies.

\subsection{Holomorphic Localization}

We turn for a moment to a general description of Bott's holomorphic localization and its application to Futaki-Morita integrals.  Let $(M,g)$ be a Hermitian manifold and $E$ a holomorphic vector bundle on M with Hermitian metric $h$ and curvature $R(h) \in \Omega^{1,1}(\text{End}(E))$ of its Chern connection.  Suppose that there exists smooth $\theta(h) \in \Gamma (\text{End}(E))$ satisfying

\[
i_X R(h) = -\bar \partial \theta_X(h)
\]

Given an invariant polynomial 
\[
\phi: \mathfrak{gl}(\text{rank}(E),\C) \rightarrow \C
\]

\noindent of degree $n+k$, the \emph{Futaki-Morita} integral is defined as

\[
f_{\phi}(X) := \int_M \phi \left(\theta_X(h) + \frac{\sqrt{-1}}{2\pi}R(h)\right),
\]

\noindent which turns out to be independent of the chosen metrics \cite{futakimorita:1985}.

Futaki-Morita integrals may generally be computed via holomorphic localization: Define a $(1,0)$ form $\eta$ on $M/\textrm{Zero}(X)$ by 

\[
\eta(\cdot) = \frac{g(\cdot, \bar X)}{\|X\|^2}
\]

Bott's transgression argument \cite{bott:1967mich} \cite{griffithsharris} shows

\begin{align}\label{transgression}
f_{\phi}(X) 
= -\sum_{\lambda} \lim_{r \rightarrow 0^+} \int_{\partial B_{r}(Z_{\lambda})} \phi(\theta_X(h) + R(h)) \wedge \sum_{k = 0}^{n-1} (-1)^k \eta \wedge (\bar\partial \eta)^k
\end{align}

\noindent where $\{Z_{\lambda}\}$ is the zero locus of $X$ and $B_{r}(Z_{\lambda})$ is any small neighborhood of $Z_{\lambda}$.

We say that $\text{Zero}(X)$ is \emph{nondegenerate} when the endomorphism $L_{\lambda}(X)$ of the normal bundle $N_{\lambda}$ to $Z_{\lambda}$ induced by $X$ is invertible\footnote{One may verify this endomorphism is given by $L_{\lambda}(X)(Y) = (\nabla_YX)^{\perp}$}.  The work of Bott \cite{bott:1967jdg} essentially showed that when $\text{Zero}(X)$ is nondegenerate,

\begin{equation}\label{bott}
f_{\phi}(X) = \sum_{\lambda} \int_{Z_{\lambda}} \frac{\phi(\theta_X(h) + \frac{\sqrt{-1}}{2\pi}R(h))}{\det \left(L_{\lambda}(X) + \frac{\sqrt{-1}}{2\pi}K_{\lambda}\right)}
\end{equation}

\noindent where $K_{\lambda}$ is the curvature form of the connection induced on $N_{\lambda}$.

On the other hand, when $\text{Zero}(X)$ consists of isolated degenerate points:

\begin{theorem}[Cherveny \cite{cherveny:2016}]\label{degeneratelocalize}
If the zero locus of $X \in \mathfrak{h}$ is an isolated degenerate zero $p$ such that in local coordinates centered at $p$

\[
z_i^{\alpha_i + 1} = \sum b_{ij} X_j
\]

\noindent for some matrix $B = (b_{ij})$ of holomorphic functions, then

\begin{equation}\label{degeneratelocalization}
f_{\phi}(X) = \frac{1}{\prod \alpha_i !} \cdot \frac{\partial^{|\alpha|} \left( \phi(\theta_X(h)) \det B \right)}{\partial z_1^{\alpha_1} \cdots \partial z_n^{\alpha_n}} \bigg|_{z = 0}
\end{equation}

\noindent If $\text{Zero}(X)$ consists of multiple isolated degenerate zeros, then $f_{\phi}(X)$ is the sum of such contributions.
\end{theorem}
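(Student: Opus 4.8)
The plan is to show that the local contribution to the transgression formula (\ref{transgression}) at the isolated zero $p$ is exactly the Grothendieck residue of $\phi(\theta_X(h))\,dz$ with respect to the regular sequence $X_1,\dots,X_n$, and then to evaluate that residue using the hypothesis $z_i^{\alpha_i+1}=\sum_j b_{ij}X_j$. Writing $C_p$ for the single-zero contribution
\[
C_p = -\lim_{r\to 0^+}\int_{\partial B_r(p)}\phi(\theta_X(h)+R(h))\wedge\sum_{k=0}^{n-1}(-1)^k\eta\wedge(\bar\partial\eta)^k,
\]
I would first reduce the integrand by a dimension count: $\eta$ is a $(1,0)$-form and $\bar\partial\eta$ a $(1,1)$-form, so $\eta\wedge(\bar\partial\eta)^k$ has real degree $2k+1$, and only $k=n-1$ can contribute on the $(2n-1)$-real-dimensional sphere $\partial B_r(p)$. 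Since $\theta_X(h)$ is a degree-zero endomorphism while every factor of the smooth curvature $R(h)$ carries positive form degree, any such factor pushes the form past top degree; moreover $\eta\wedge(\bar\partial\eta)^{n-1}$ blows up like $\|X\|^{-2n}$, so the curvature contributions carry extra positive powers of $r$ and drop out in the limit. This leaves
\[
C_p = -\lim_{r\to 0^+}\int_{\partial B_r(p)}\phi(\theta_X(h))\;\eta\wedge(\bar\partial\eta)^{n-1}.
\]

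Second, I would identify this boundary integral with a residue. The form $\eta=g(\cdot,\bar X)/\|X\|^2$ is a Cauchy--Fantappi\`e section built from $\bar X$, so $\eta\wedge(\bar\partial\eta)^{n-1}$ is the Bochner--Martinelli--type kernel attached to the holomorphic map $(X_1,\dots,X_n)$. The cleanest route is by deformation: choose a holomorphic family $X_t$ with $X_0=X$ whose zeros for $t\neq 0$ are all nondegenerate and contained in a fixed small ball $B_r(p)$ (possible since $p$ is an isolated zero of finite multiplicity, which splits under a generic perturbation). For $t\neq 0$ the nondegenerate formula (\ref{bott}) gives each local contribution as $\phi(\theta_{X_t}(h)(q))/\det DX_t(q)$, and for a simple zero this equals the Grothendieck residue $\mathrm{Res}_q[\phi(\theta_{X_t}(h))\,dz/(X_{t,1},\dots,X_{t,n})]$ by the change of coordinates $w_i=X_{t,i}$. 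As $t\to 0$ the zeros stay inside $B_r(p)$, so $\eta_t$, $\theta_{X_t}(h)$ and the full integrand converge uniformly on the fixed boundary $\partial B_r(p)$; hence $C_p=\lim_{t\to0}\sum_{q\in B_r(p)}\mathrm{Res}_q[\cdots]$, and by continuity of the sum of residues in a fixed region on the parameter \cite{griffithsharris}, this limit is $\mathrm{Res}_p[\phi(\theta_X(h))\,dz/(X_1,\dots,X_n)]$. One should note that only the holomorphic $|\alpha|$-jet of the smooth numerator $\phi(\theta_X(h))$ survives in the limit, since the antiholomorphic part integrates to zero against the kernel.

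Third, with $C_p=\mathrm{Res}_p\!\big[\phi(\theta_X(h))\,dz/(X_1,\dots,X_n)\big]$ in hand, I would invoke the transformation law for Grothendieck residues \cite{griffithsharris}: since $z_i^{\alpha_i+1}=\sum_j b_{ij}X_j$,
\[
\mathrm{Res}_p\!\begin{bmatrix}\phi(\theta_X(h))\,dz\\ X_1,\dots,X_n\end{bmatrix}=\mathrm{Res}_p\!\begin{bmatrix}\phi(\theta_X(h))\det B\,dz\\ z_1^{\alpha_1+1},\dots,z_n^{\alpha_n+1}\end{bmatrix}.
\]
The monomial residue on the right is evaluated by the iterated one-variable Cauchy integral, giving $\frac{1}{\prod\alpha_i!}\,\partial^{\alpha}\!\big(\phi(\theta_X(h))\det B\big)\big|_{z=0}$, which is precisely (\ref{degeneratelocalization}). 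The case of several isolated degenerate zeros is immediate from the sum over $\lambda$ already present in (\ref{transgression}).

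The main obstacle is the second step: pinning down, with the correct normalizing constant and sign, that the metric-dependent transgression kernel $\eta\wedge(\bar\partial\eta)^{n-1}$ integrates against the smooth numerator to the honest holomorphic Grothendieck residue, and that only the finite holomorphic jet contributes. The deformation argument sidesteps the direct kernel computation by borrowing the nondegenerate formula (\ref{bott}), but it then requires a careful continuity-of-residues argument on the fixed boundary $\partial B_r(p)$ as the zero cluster splits, together with the uniform convergence of $\theta_{X_t}(h)$ and $\eta_t$ there.
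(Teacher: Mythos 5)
The first thing to say is that the paper does not actually prove Theorem \ref{degeneratelocalize}: it is imported wholesale from the author's earlier work \cite{cherveny:2016}, so there is no internal proof to compare against, and your proposal must be judged on its own. Its architecture --- identify the local contribution with the Grothendieck residue of $\phi(\theta_X(h))\,dz$ relative to $(X_1,\dots,X_n)$, then apply the transformation law using $z_i^{\alpha_i+1}=\sum_j b_{ij}X_j$ and evaluate the monomial residue by iterated Cauchy integrals --- is the natural Baum--Bott style route, and your third step is correct as stated. But the first two steps have genuine gaps. In Step 1, the degree count is wrong: the term pairing the degree-$2j$ component of $\phi(\theta_X(h)+R(h))$ with $\eta\wedge(\bar\partial\eta)^{n-1-j}$ has total degree exactly $2n-1$, not ``past top degree,'' so nothing is killed dimensionally. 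Discarding those terms as $r\to 0$ is a scaling estimate that is standard only at \emph{nondegenerate} zeros, where $\|X\|\sim r$; at a degenerate zero $\|X\|$ vanishes to higher and anisotropic order, and naive sup-norm estimates fail --- indeed they fail even to bound the $j=0$ main term, whose convergence rests on cancellation. (This step could in fact be excised: the transgressed form is closed on the punctured ball, so the boundary integral is independent of $r$, and a correctly executed deformation argument applied to the full integrand never needs the curvature terms discarded in advance. But as written you present Step 1 as a needed reduction with an invalid justification.)

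The more serious gap is in Step 2. The assertion that ``only the holomorphic jet of the smooth numerator survives, since the antiholomorphic part integrates to zero against the kernel,'' invoked together with ``continuity of the sum of residues,'' is false for a general smooth numerator --- the limit of the residue sums need not even exist. Take $n=1$, $X=z^2\partial_z$, numerator $f=\bar z$, and $X_t=(z^2-t^2)\partial_z$: the two simple zeros contribute $f(t)/(2t)+f(-t)/(-2t)=\bar t/(2t)+\bar t/(2t)=\bar t/t$, which equals $1$ along real $t$, $-1$ along imaginary $t$, and has no limit as $t\to 0$, whereas the jet formula predicts $\partial_z\bar z\,|_0=0$. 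What rescues the actual theorem is the compatibility equation $\bar\partial\theta_X(h)=-i_XR(h)$: it forces the antiholomorphic derivatives of $\phi(\theta_X(h))$ to be controlled by $X$ itself (and, for the deformed fields, $\bar\partial\theta_{X_t}(h)$ to vanish along $\mathrm{Zero}(X_t)$), and it is precisely this constraint --- used nowhere in your argument --- that makes the antiholomorphic contributions disappear in the limit. Two smaller repairs are needed in the same step: your $X_t$ is defined only on the coordinate chart, so you must invoke the local transgression-plus-Stokes argument on the annulus together with Bott's local evaluation at simple zeros, not the global formula (\ref{bott}); and you must construct a family $\theta_t$ compatible with $X_t$, e.g.\ by solving $\bar\partial u_t=-\,i_{(X_t-X)}R(h)$ on the ball (solvable since $\bar\partial R(h)=0$) and setting $\theta_t=\theta_X(h)+u_t$, so that the transgression identity holds for $t\neq 0$ and $\theta_t\to\theta_X(h)$ uniformly on $\partial B_r(p)$.
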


A special case of both (\ref{bott}) and (\ref{degeneratelocalization}) is when $p$ is an isolated nondegenerate zero: As $DX$ is invertible near $p$, take $B = (DX)^{-1}$ and $\alpha_i = 0$, giving

\begin{equation}\label{isolatednondeg}
f_{\phi}(X) = \phi(DX_p) \det B = \frac{\phi(DX_p)}{\det DX_p}
\end{equation}

Localization involving a positive dimensional degenerate zero locus is quite complicated and not understood in the general \kahler setting.  The calculations in this paper may be viewed as a step in this direction.

\subsection{Localization of $\fut(X, \Omega)$}

Returning to the localization of $\fut(\Omega, X)$, suppose without loss of generality that $\Omega = c_1(L)$ where $L$ is a positive line bundle.  Applying the above Futaki-Morita framework to the bundle $E = K_M^{\pm}\otimes L^{n-2j}$, standard computations yield 

\begin{gather*}
R_E = \pm \ric + (n-2j) \omega\\
i_X R_E = \pm i_X \ric + (n-2j)i_X \omega \\
-\bar \partial \theta_E = \pm i_X \ric - (n-2j)\bar \partial \theta_X \\
\theta_E = \mp \Delta \theta_X + (n-2j) \theta_X
\end{gather*}

Take $\phi$ to be the invariant polynomial $\phi(A) = \text{Tr}(A^{n+1})$.  The first integral in (\ref{expandedfutaki}) is then recognized to be 

\[
f_{\phi, E}(X) = \left( \frac{\sqrt{-1}}{2\pi}\right)^n\int_M (-\Delta \theta_X + \ric +(n-2j)(\theta_X + \omega) )^{n+1}  
\]

\noindent for $E = K_M\otimes L^{n-2j}$; the second is 

\[
f_{\phi, E}(X) = \left( \frac{\sqrt{-1}}{2\pi}\right)^n\int_M (\Delta \theta_X - \ric +(n-2j)(\theta_X + \omega) )^{n+1} 
\]

\noindent for $E = K_M^-\otimes L^{n-2j}$; and the third 

\[
f_{\phi, E}(X) = \left( \frac{\sqrt{-1}}{2\pi}\right)^n (n+1-2j)^{n+1} \int_M \left(\theta_X + \omega \right)^{n+1} 
\]

\noindent for $E = L^{n+1-2j}$.  The Calabi-Futaki invariant is thus fully expressible in terms of Futaki-Morita integral invariants.  

Applying localization (\ref{transgression}) to each Futaki-Morita invariant in this expression and using the combinatorial identity (\ref{combinatorial}) again to resolve summations yields

\[
\hspace{-.3 in}\fut(\Omega, X) = \lim_{r \rightarrow 0} \left( \frac{\sqrt{-1}}{2\pi}\right)^n \sum_{\lambda} \int_{\partial B_{r}(Z_{\lambda})} \hspace{-.15 in} \left[ (-\Delta \theta_X + \ric)(\theta_X + \omega)^n + \frac{\bar S (\theta_X + \omega)^{n+1}}{n+1} \right] \wedge \sum_{k=0}^{n-1} (-1)^k \eta \wedge (\bar \partial \eta)^k
\]

When $\text{Zero}(X)$ is nondegenerate, this expression was evaluated by Tian using (\ref{bott}), with explicit cohomological simplifications in the specific cases of nondegenerate isolated zeros, or nondegenerate zero loci on a K\"ahler surface, or the Fano case $\Omega = c_1(M)$.  See Theorem 6.3 in \cite{tian:1996}; also p. 31 in \cite{tian:2000}.

\subsection{Blowup Situation}

Our interest will be the blowup scenario where degenerate contributions to localization naturally arise.  To align notation with Li-Shi \cite{lishi:2015}, let $\mu = \frac{\bar S}{n}$ and define

\begin{gather*}
I_{Z_{\lambda}} := \sum_{\lambda} \lim_{r \rightarrow 0^+} \left( \frac{\sqrt{-1}}{2\pi}\right)^n \int_{\partial B_{r}(Z_{\lambda})} (\Delta \theta_X - \ric)(\theta_X + \omega)^n \wedge \sum_{k = 0}^{n-1} (-1)^k \eta \wedge (\bar\partial \eta)^k \\
J_{Z_{\lambda}} := - \sum_{\lambda} \lim_{r \rightarrow 0^+} \left( \frac{\sqrt{-1}}{2\pi}\right)^n \int_{\partial B_{r}(Z_{\lambda})} (\theta_X + \omega)^{n+1} \wedge \sum_{k = 0}^{n-1} (-1)^k \eta \wedge (\bar\partial \eta)^k\\
\fut_{Z_{\lambda}}(X, \Omega) := I_{Z_{\lambda}} - \frac{n\mu}{n+1} J_{Z_{\lambda}}
\end{gather*}

\noindent so that

\begin{equation}\label{futakidecomposition}
\fut(\Omega, X) = \sum_{\lambda} \fut_{Z_{\lambda}}(X, \Omega) = \sum_{\lambda} \left(I_{Z_{\lambda}} - \frac{n\mu}{n+1} J_{Z_{\lambda}} \right).
\end{equation}

\noindent Also define the summed contribution

\begin{gather*}
J_M(\Omega, X) := \sum_{\lambda} J_{Z_{\lambda}}(\Omega, X) = \left( \frac{\sqrt{-1}}{2\pi}\right)^n \int_M (\theta_X + \omega)^{n+1}
\end{gather*}

We close by expressing a relationship between local contributions to $\fut(X, \Omega)$ on $M$ and those for $\fut(\tilde X, \tilde \Omega)$ on $\tilde M$, where notation is as in the introduction. 


\begin{lemma}[Li-Shi \cite{lishi:2015}, Lemma 3.1]\label{contributionslemma} Let $X\in \mathfrak{h}$ vanish at $p$, $\tilde M$ be the blowup of $M$ at $p$, $\tilde \Omega$ be the \kahler class $\pi^*\Omega - \epsilon c_1(E)$, and $\tilde X$ be the extension of $X$ to $\tilde M$.  Define $\delta = \tilde \mu - \mu$.  Then
\[
\fut_{\tilde M}(\tilde \Omega, \tilde X) = \fut_M(\Omega, X) - \frac{n \delta}{n+1} J_M(\Omega, X) + \sum_{E} \fut_{\tilde Z_{\lambda}}(\tilde \Omega, \tilde X) + \frac{n\delta}{n+1} J_p(\Omega, X) - \fut_p(\Omega, X)
\]
\end{lemma}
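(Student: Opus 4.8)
The plan is to exploit the fact that $\pi:\tilde M\to M$ is a biholomorphism away from the exceptional divisor $E$, so that the local contributions to the two Futaki invariants coincide outside $E$ up to a single correction coming from the change in average scalar curvature. First I would observe that the zero locus of $\tilde X$ splits into two disjoint families: the components $\tilde Z_\lambda\subset E$ lying on the exceptional divisor, and the components away from $E$, which $\pi$ identifies biholomorphically with the components $Z_\lambda\neq p$ of $\mathrm{Zero}(X)$ on $M$. Applying the decomposition (\ref{futakidecomposition}) on $\tilde M$,
\[
\fut_{\tilde M}(\tilde\Omega,\tilde X)=\sum_{\tilde Z_\lambda\subset E}\fut_{\tilde Z_\lambda}(\tilde\Omega,\tilde X)+\sum_{\lambda\neq p}\fut_{\tilde Z_\lambda}(\tilde\Omega,\tilde X),
\]
so the task reduces to comparing each term in the second sum with the corresponding $\fut_{Z_\lambda}(\Omega,X)$ on $M$.

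The key step is a locality argument. Since $\tilde\Omega$ restricts to $\Omega$ away from $E$, I would choose the representative $\tilde\omega$ of $\tilde\Omega$ so that $\tilde\omega=\pi^*\omega$ outside a small neighborhood of $E$ (the standard construction of \cite{griffithsharris}), and fix the constant in the holomorphy potential $\tilde\theta_X$ so that $\tilde\theta_X=\pi^*\theta_X$ there. Then the full triple of data $(\tilde\omega,\tilde\theta_X,\tilde\eta)$ entering $I$ and $J$ pulls back to $(\omega,\theta_X,\eta)$ in a neighborhood of each $Z_\lambda$ with $\lambda\neq p$. Because $I_{Z_\lambda}$ and $J_{Z_\lambda}$ are defined as $r\to 0^+$ limits of integrals over the shrinking boundaries $\partial B_r(Z_\lambda)$, they depend only on the germ of this data, so $\tilde I_{\tilde Z_\lambda}=I_{Z_\lambda}$ and $\tilde J_{\tilde Z_\lambda}=J_{Z_\lambda}$. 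Consequently the only discrepancy between $\fut_{\tilde Z_\lambda}(\tilde\Omega,\tilde X)$ and $\fut_{Z_\lambda}(\Omega,X)$ is the replacement of $\mu$ by $\tilde\mu$ in the coefficient of $J$:
\[
\fut_{\tilde Z_\lambda}(\tilde\Omega,\tilde X)-\fut_{Z_\lambda}(\Omega,X)=-\frac{n(\tilde\mu-\mu)}{n+1}J_{Z_\lambda}=-\frac{n\delta}{n+1}J_{Z_\lambda}.
\]

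Summing over $\lambda\neq p$ and using $\sum_{\lambda\neq p}\fut_{Z_\lambda}=\fut_M-\fut_p$ together with $\sum_{\lambda\neq p}J_{Z_\lambda}=J_M-J_p$ gives
\[
\sum_{\lambda\neq p}\fut_{\tilde Z_\lambda}(\tilde\Omega,\tilde X)=\fut_M(\Omega,X)-\fut_p(\Omega,X)-\frac{n\delta}{n+1}\bigl(J_M(\Omega,X)-J_p(\Omega,X)\bigr).
\]
Substituting into the displayed decomposition of $\fut_{\tilde M}$ and rearranging yields precisely the claimed identity. The main obstacle is the locality step: one must verify that each $I_{Z_\lambda}$ and $J_{Z_\lambda}$ genuinely depends only on the data near $Z_\lambda$, and---since these individual contributions, unlike the total invariant, are sensitive to the normalization of the holomorphy potential---that the constants in $\theta_X$ and $\tilde\theta_X$ are chosen compatibly so that $\tilde\theta_X=\pi^*\theta_X$ holds away from $E$. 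Once the data is arranged to agree outside $E$, the remainder is the bookkeeping above, and note that no use of hypothesis ($\star$) is needed here, as that assumption enters only when the exceptional contributions $\sum_E \fut_{\tilde Z_\lambda}(\tilde\Omega,\tilde X)$ are later evaluated.
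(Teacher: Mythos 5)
Your proposal is correct and follows essentially the same route as the paper: the paper likewise derives the lemma from the localization decomposition (\ref{futakidecomposition}), splitting $\text{Zero}(\tilde X)$ into components on and off $E$, identifying the off-$E$ components with those of $X$ away from $p$, and absorbing the change $\mu \to \tilde\mu$ as the $-\frac{n\delta}{n+1}J_{Z_\lambda}$ correction. Your explicit treatment of the locality step---choosing $\tilde\omega = \pi^*\omega$ and $\tilde\theta_X = \pi^*\theta_X$ outside a neighborhood of $E$ via the Griffiths--Harris construction---is exactly the justification the paper leaves implicit, and your closing remarks on potential normalization and the irrelevance of ($\star$) here are both accurate.
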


The lemma is a consequence of the localization formula for the Futaki invariant (\ref{futakidecomposition}) and the above definitions after separating the fixed components of $\tilde X$ on $\tilde M$ into those contained in the exceptional divisor and those not.  The latter type is in one-to-one correspondence with the fixed components of $X$ on $M$ apart from $p$, and moreover these local Futaki invariants agree after an adjustment to $J_{Z_{\lambda}}$ by $\delta$.

Recall that $\theta_X$ is defined up to addition of a constant.  Without loss of generality, we may prove our main theorem under the simplifying assumption that this constant is chosen so $\theta_X$ has average value zero, and consequently $J_M(\Omega, X) = 0$.  As $p$ is nondegenerate, the term $\fut_p(\Omega, X)$ is immediate using (\ref{isolatednondeg}):

\begin{equation}\label{futakip}
\fut_p(\Omega, X) = I_p - \frac{n\mu}{n+1} J_p = \frac{\text{Tr}(A)}{\det A} \theta_p^n - \frac{n\mu}{n+1} \frac{\theta_p^{n+1}}{\det A}
\end{equation}

\noindent where we have simplified the notation using $A := DX_p$ and $\theta_p := \theta_X(p)$. 

With this choice of $\theta_X$, and in light of Lemma \ref{contributionslemma} and (\ref{futakip}),

\begin{lemma}\label{toprovelemma}
To prove Theorem \ref{maintheorem}, it is sufficient to show
\[
\frac{\text{Tr}(A) \theta_p^n}{\det A} - \frac{n(\mu + \delta)}{n+1} \frac{\theta_p^{n+1}}{\det A} - \sum_{j=1}^m \fut_{q_j}(\tilde \Omega, \tilde X) = n(n-1)\theta_p\epsilon^{n-1} + O(\epsilon^n)
\]
\noindent where $\{q_1, \dots, q_m\}$ are the isolated, possibly degenerate zeros of $\tilde X$ in $E$.
\end{lemma}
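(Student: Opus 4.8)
The plan is to obtain the stated reduction as a direct algebraic consequence of Lemma~\ref{contributionslemma}, substituting the quantities already computed. First I would invoke the zero-average normalization of $\theta_X$: since $\int_M \theta_X\,\omega^n = 0$ we have $J_M(\Omega, X) = 0$, so the term $\frac{n\delta}{n+1}J_M(\Omega, X)$ in Lemma~\ref{contributionslemma} drops out entirely. Next I would identify the sum $\sum_E \fut_{\tilde Z_\lambda}(\tilde\Omega, \tilde X)$ over fixed components of $\tilde X$ lying in the exceptional divisor. By hypothesis ($\star$) the lift $\tilde X$ has no positive-dimensional zero locus, so these fixed components are exactly the isolated (possibly degenerate) points $\{q_1, \dots, q_m\}$, and the sum is $\sum_{j=1}^m \fut_{q_j}(\tilde\Omega, \tilde X)$.

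The remaining two terms of Lemma~\ref{contributionslemma} concern the single nondegenerate point $p$, and both are already explicit. Isolated nondegenerate localization (\ref{isolatednondeg}) gives $J_p = \theta_p^{n+1}/\det A$, while (\ref{futakip}) records $\fut_p(\Omega, X)$. Collecting $\frac{n\delta}{n+1}J_p - \fut_p$ and writing $\mu + \delta = \tilde\mu$, these combine into
\[
\frac{n\delta}{n+1}\frac{\theta_p^{n+1}}{\det A} - \left(\frac{\text{Tr}(A)}{\det A}\theta_p^n - \frac{n\mu}{n+1}\frac{\theta_p^{n+1}}{\det A}\right) = \frac{n(\mu+\delta)}{n+1}\frac{\theta_p^{n+1}}{\det A} - \frac{\text{Tr}(A)}{\det A}\theta_p^n.
\]
Substituting these three simplifications back into Lemma~\ref{contributionslemma} yields
\[
\fut_{\tilde M}(\tilde\Omega, \tilde X) - \fut_M(\Omega, X) = \sum_{j=1}^m \fut_{q_j}(\tilde\Omega, \tilde X) + \frac{n(\mu+\delta)}{n+1}\frac{\theta_p^{n+1}}{\det A} - \frac{\text{Tr}(A)}{\det A}\theta_p^n.
\]

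Rearranging, the bracketed left-hand side of the lemma is precisely $-(\fut_{\tilde M} - \fut_M)$. Hence the asserted identity is equivalent to $\fut_{\tilde M}(\tilde\Omega, \tilde X) - \fut_M(\Omega, X) = -n(n-1)\theta_p\epsilon^{n-1} + O(\epsilon^n)$, which is the single-point case of Theorem~\ref{maintheorem}. The general statement for blowups at $\{p_1, \dots, p_k\}$ then follows from additivity of localization over disjoint fixed loci: distinct blowup points and their exceptional divisors contribute independently, so one applies the single-point identity at each $p_i$ with its own $\epsilon_i$ and sums.

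I expect no serious analytic obstacle here, as this is a bookkeeping reduction; the care lies in tracking signs through Lemma~\ref{contributionslemma} and in verifying that ($\star$) genuinely collapses the exceptional-divisor fixed locus to the isolated points $q_j$. The one nontrivial input that makes the reduction meaningful---that each degenerate $q_j$ contributes a finite, computable $\fut_{q_j}$ via the degenerate localization of Theorem~\ref{degeneratelocalize}---is not needed to prove sufficiency; it is precisely the content postponed to Sections 3 and 4, where the displayed identity is actually established.
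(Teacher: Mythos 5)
Your proposal is correct and is essentially the paper's own argument: the paper derives this lemma exactly by combining Lemma~\ref{contributionslemma} with the zero-average normalization (so $J_M(\Omega,X)=0$), the identification of the exceptional fixed locus with the isolated points $q_j$ under ($\star$), and the explicit nondegenerate-point formulas (\ref{isolatednondeg}) and (\ref{futakip}), just as you do. Your sign-tracking and the reduction of the multi-point statement to the single-point identity by additivity over disjoint fixed loci match the paper's (largely implicit) bookkeeping.
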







\section{Case I: Maximally Degenerate Zero}

In this section we will prove Theorem \ref{maintheorem} when $\tilde X$ has a single isolated zero in the exceptional locus, necessarily of maximal degeneracy.  The case of multiple zeros builds on these computations and will be given in the next section.

Let $p \in \text{Zero}(X)$ be the zero at which we will blow up $M$.  Choosing coordinates about $p$ such that $DX_p$ is in Jordan form, a maximal degenerate zero on the blowup corresponds to $DX_p$ having a single Jordan block.

To be precise, let $A$ denote the $n \times n$ Jordan matrix

\begin{equation}\label{jordanblock}
A = (A_{ij}) = \begin{bmatrix} a & 1 & 0 & \cdots & 0 \\ 0  & \ddots & \ddots & & \vdots \\ \vdots &  & \ddots & \ddots & 0 \\ \vdots &  & & \ddots & 1 \\ 0 & \cdots & \cdots & 0 & a \end{bmatrix}
\end{equation}

By Poincar\'e's Theorem \ref{poincare}, if $DX_p = A$ then $X$ is biholomorphically equivalent to its linearization on some neighborhood $U$ of $p$ provided that $a \neq 0$ (the zero is nondegenerate).  Therefore, without loss of generality, we may assume $X = \sum X_i \frac{\partial}{\partial z_i}$ is given on $U$ by

\begin{equation}\label{oneblock}
X = \sum_{i=1}^{n-1}(az_i + z_{i+1}) \frac{\partial}{\partial z_i} + az_n \frac{\partial}{\partial z_n}
\end{equation}

%
%
%
%
%
%
%
%

Following \cite{lishi:2015}, we now describe $X$'s natural extension $\tilde X$ to the blowup in local coordinates.  Let $\tilde U = \pi^{-1}(U)$ be the neighborhood of the exceptional divisor on $\tilde M = \textrm{Bl}_p M$ given by

\[
\tilde U = \{\left((z_1, \dots, z_n),[\eta_1, \dots, \eta_n] \right) | z_i\eta_j = z_j \eta_i\} \subseteq \C^n \times \C P^{n-1}
\]

Cover $\tilde U$ with charts $\tilde U_i = \{ \eta_i \neq 0\}$ having local coordinates 

\[
(u_1, \dots, u_n) = \left( \frac{\eta_1}{\eta_i}, \dots, z_i, \dots, \frac{\eta_n}{\eta_i} \right)
\]

\noindent Note that the slice of $\tilde U_i$ with $z_i = 0$ is just the standard cover for $E \cong \C P^{n-1}$.


In these coordinates the holomorphic extension $\tilde X$ of any $X \in \mathfrak{h}$ vanishing at $p$ to the blow-up is given by

\begin{equation}\label{xtildelocalcoords}
\tilde X \big|_{\tilde U_i} = X_i \frac{\partial}{\partial u_i} + \sum_{j \neq i} \frac{1}{u_i}(X_j - u_jX_i) \frac{\partial}{\partial u_j} 
\end{equation}

In particular, $\tilde X$ over $\tilde U_1$ for the one block case (\ref{oneblock}) presently being considered is

\begin{equation}\label{oneblockblowup}
\tilde X|_{\tilde U_1} = u_1(a + u_2) \frac{\partial}{\partial u_1} + \sum_{i = 2}^{n-1} \left[ (u_{i+1} - u_i u_2) \frac{\partial}{\partial u_i}\right] + (-u_2 u_n)\frac{\partial}{\partial u_n}.
\end{equation}

\noindent One may verify that the isolated zero at the origin in this chart, which we denote $q$, is indeed the only zero of $\tilde X$ in the exceptional divisor.  It is ``maximally degenerate" in the sense that zero is an eigenvalue of $D\tilde X_q$ with algebraic multiplicity $n-1$.  This is all geometrically obvious when one considers the action induced by $X$ on lines through $p$, where the zero locus of $\tilde X$ in the exceptional divisor corresponds to eigenspaces for $DX_p$.

We now choose a convenient metric in the class $\tilde \Omega = \pi^*\Omega - \epsilon c_1([E])$ following \cite{griffithsharris} (p. 185).  Denote by $B_r \subseteq U$ the ball of radius $r$ centered at $p$.  Suppose for simplicity $B_1 \subseteq U$, and let $\tilde B_1 = \pi^{-1}(B_1)$.  The fiber of $[E]$ over $\tilde B_1$ at a point is simply 

\[
[E]_{(z,\eta)} = \{\lambda(\eta_1, \dots, \eta_n), \lambda \in \C\}.
\]

Denote by $h_1$ the metric on $[E]|_{\tilde B_1}$ given by $\|(\eta_1, \dots, \eta_n)\|^2$.  Also let $\sigma \in H^0(\tilde M, \mathcal{O}([E]))$ be the holomorphic section whose zero divisor is $E$ and denote by $h_2$ the metric on $[E]|_{\tilde M/E}$ such that $\|\sigma\|_{h_2} \equiv 1$.  Finally choose a partition of unity $\{\rho_1, \rho_2\}$ subordinate to the cover $\{\tilde B_1, \tilde M / \tilde B_{1/2}\}$.  The metric 

\[
h = \rho_1 h_1 + \rho_2 h_2
\]

\noindent has nonzero curvature only on $\tilde B_1$.  Our \kahler form will be
\[
\tilde \omega = \pi^*\omega + \epsilon \partial \bar \partial \log h.
\]

A short calculation shows the holomorphy potential $\theta_{\tilde X}$ for $\tilde X$ relative to this $\tilde \omega$ on $\tilde U_1$ is

%
%

\begin{equation}\label{thetablowup}
\theta_{\tilde X} = \pi^*\theta_X - \epsilon \left[ a + \frac{u_2 + \bar u_2 u_3 + \bar u_3 u_4  + \dots + \bar u_{n-1} u_n}{1 + |u_2|^2 + \dots + |u_n|^2} \right]
\end{equation}

For brevity, denote $\theta_X (p)$ by $\theta_p$ and likewise for $\tilde \theta_q := \theta_{\tilde X} (q)$, so that 
\begin{equation}\label{thetaq}
\tilde \theta_q = \theta_p - a\epsilon.
\end{equation}

\noindent All derivatives of all orders of $\theta_{\tilde X}$ vanish at $q$ with the exception of
\begin{equation}\label{thetaqprime}
\frac{\partial \theta_{\tilde X}}{\partial u_2}(q) = -\epsilon.
\end{equation}

\noindent Another short calculation shows that

\begin{equation}\label{thetalaplacian}
\Delta_{\tilde \omega} \theta_{\tilde X} = a- (n-1)u_2
\end{equation}

By Theorem \ref{degeneratelocalize}, the local contribution to any Futaki-Morita integral in the present situation (and in particular the Futaki invariant) is given by the residue formula

\begin{equation}\label{localizeoneblock}
\text{Res}_q \phi = \frac{1}{\prod (\alpha_j - 1)!} \frac{\partial^{|\alpha|}(\phi \det B)}{\partial u_1^{\alpha_1 - 1} \partial u_2^{\alpha_2 - 1} \cdots \partial u_n^{\alpha_n - 1}} \bigg|_q
\end{equation}

\noindent where $\alpha_j$ are natural numbers and $B = (b_{ij})$ is an $n \times n$ matrix such that 
\begin{equation}\label{hilbert}
u_j^{\alpha_j} = \sum b_{ij} \tilde X_i.
\end{equation}

We now construct $B$ in order to calculate (\ref{localizeoneblock}).  Choose $k$ such that $2^k < n \leq 2^{k+1}$.  It is straightforward to verify for (\ref{oneblockblowup}) that 

\begin{align*}
u_1 &= \left[ \frac{1}{a} - \sum_{i=2}^n \left(\frac{-1}{a}\right)^i u_i \right] \tilde X_1 + \sum_{i = 2}^n\left[- u_1  \left(\frac{-1}{a}\right)^i \right] \tilde X_i \\
u_2^n &= \sum_{i = 2}^n \left[- u_2^{n-i} \right] \tilde X_i \\
\end{align*}

For $j = 3, \dots, n$, one calculates

\[
u_j^{2^{k+1}} = \sum_{l = 2}^{j-1} \left[ -u_2^{(j-1)2^{k+1} - l} + u_2^{(j-l-1)2^{k+1}} \prod_{i=0}^k(u_{l+1}^{2^i} +u_2^{2^i}u_{l}^{2^i}) \right] \tilde X_l + \sum_{l = j}^n -u_2^{(j-1)2^{k+1} - l} \tilde X_l\]

\noindent The idea is to repeatedly factor $u_j^{2^{k+1}} - (u_2u_{j-1})^{2^{k+1}}$ into binomials, one of which is eventually $\tilde X_j$, and insert the above expression for $u_2^n$ in terms of the $\tilde X_i$.  These relations contain the information necessary to construct $B$ for (\ref{hilbert}) with parameters $\alpha_1 = 1, \alpha_2 = n, \alpha_j = 2^{k+1}$ for $j = 3, \dots, n$ (these are certainly not minimal $\alpha_i$ for every $n$, but are convenient for a general setup).

With these choices, the determinant of $B$ is found by row reduction to be

\begin{equation}\label{detB}
\det B = (-1)^{n-1}\left[ \frac{1}{a} - \sum_{i=2}^n \left(\frac{-1}{a}\right)^i u_i \right] \prod_{j=3}^{n} \prod_{i=0}^{k}(u_j^{2^i} +u_2^{2^i}u_{j-1}^{2^i})
\end{equation}

Applying (\ref{localizeoneblock}), the residue of interest is

\[
\textrm{Res}_q \phi = \frac{1}{(n-1)![(2^k)!]^{n-2}} \cdot \frac{\partial^{|a|}(\phi \det B)}{(\partial u_2)^{n-1} (\partial u_3)^{2^k} \cdots (\partial u_n)^{2^k}} \bigg|_q
\]

Since $\phi$ is a function of $\theta_{\tilde X}$, which depends only on $u_2$ and its derivatives in our case, all other derivatives must be applied to $\det B$.  Doing so, the coefficient of $u_3^{2^k} \dots u_n^{2^k}$ in $\det B$ is found to be

\begin{equation}\label{detB1}
(-1)^{n-1} \sum_{i=0}^{n-1} \frac{(-u_2)^i}{a^{i+1}}
\end{equation}

\noindent so that the residue after taking appropriate derivatives and evaluating at $u_3 = ... = u_n = 0$ is

\begin{align*}
\textrm{Res}_q \phi &= \frac{1}{(n-1)!} \frac{\partial}{(\partial u_2)^{n-1}} \left(\phi \sum_{i=0}^{n-1} (-1)^{n-1} \frac{(-u_2)^i}{a^{i+1}}\right) \bigg|_{q} \\
&= \frac{1}{(n-1)!} \sum_{j=0}^{n-1} \binom{n-1}{j} \cdot \frac{(-1)^{n-1}(-1)^{n-1-j}(n-1-j)!}{a^{n-1-j+1}} \cdot \frac{\partial^j \phi}{(\partial u_2)^j}(0)  \\
&= \sum_{j=0}^{n-1} \frac{(-1)^j}{j!a^{n-j}} \frac{\partial^j \phi}{(\partial u_2)^j}(0)
\end{align*}

We have shown

\begin{lemma}\label{maxdegenlemma}
If $\phi$ is an invariant polynomial whose value depends only on $u_2$ in the above situation, and $DX_p$ is a single Jordan block with eigenvalue $a$, then the residue contribution to the Futaki-Morita of the blowup at $p$ at the unique isolated zero $q$ is

\[
\textrm{Res}_q \phi = \sum_{i=0}^{n-1} \frac{(-1)^i}{i!a^{n-i}} \frac{\partial^i \phi}{(\partial u_2)^i}(0)
\]
\end{lemma}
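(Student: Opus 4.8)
The plan is to apply the degenerate localization formula of Theorem \ref{degeneratelocalize} directly to the extended field $\tilde X$ at its unique zero $q$, in the residue form (\ref{localizeoneblock}). The whole problem then reduces to three steps: produce an explicit matrix $B = (b_{ij})$ of holomorphic functions realizing each coordinate power $u_j^{\alpha_j}$ as a combination $\sum_i b_{ij}\tilde X_i$ (the relation (\ref{hilbert})); compute $\det B$; and apply the prescribed differential operator to $\phi\,\det B$, evaluating at $q$. Since Theorem \ref{degeneratelocalize} guarantees the answer is independent of the chosen presentation, I am free to pick whatever exponents $\alpha_j$ are convenient rather than minimal, choosing them so the algebra is uniform in $n$.

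First I would exploit the triangular structure of $\tilde X|_{\tilde U_1}$ in (\ref{oneblockblowup}). The first component $\tilde X_1 = u_1(a + u_2)$ is a unit multiple of $u_1$ near $q$ (here $a \neq 0$, i.e. nondegeneracy of the original zero, is essential), so taking $\alpha_1 = 1$ and solving for $u_1$ — using the higher $\tilde X_i$ to tidy remainders — produces the first bracketed factor of (\ref{detB}). The remaining components satisfy the recursion $\tilde X_i = u_{i+1} - u_i u_2$ with the convention $u_{n+1} := 0$, and a direct telescoping of $\sum_{i=2}^n(-u_2^{\,n-i})\tilde X_i$ collapses to $u_2^n$, giving the $u_2$-relation with $\alpha_2 = n$. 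For $j \geq 3$ I would repeatedly factor $u_j^{2^{k+1}} - (u_2 u_{j-1})^{2^{k+1}}$ into binomials and substitute the $u_2^n$-relation, producing the relations with $\alpha_j = 2^{k+1}$. Row-reducing the resulting $B$ yields the factored determinant (\ref{detB}). This construction-and-determinant step is the technical heart of the argument and the \emph{main obstacle}: verifying the ideal-membership relations (\ref{hilbert}) and tracking $\det B$ through the iterated factorizations is where all the work lies, and the powers of two are chosen precisely so the binomial factorization closes up uniformly in $n$.

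Finally I would use the hypothesis that $\phi$ depends only on $u_2$. Because $\theta_{\tilde X}$ has all derivatives vanishing at $q$ except $\partial_{u_2}\theta_{\tilde X}(q) = -\epsilon$ by (\ref{thetaqprime}), every derivative $\partial_{u_3},\dots,\partial_{u_n}$ in the residue operator must fall on $\det B$ rather than on $\phi$. Extracting the coefficient of the top monomial $u_3^{2^k}\cdots u_n^{2^k}$ in $\det B$ collapses the double product and leaves the single-variable polynomial (\ref{detB1}), namely $(-1)^{n-1}\sum_{i=0}^{n-1}(-u_2)^i/a^{i+1}$. The residue then reduces to $\tfrac{1}{(n-1)!}\,\partial_{u_2}^{\,n-1}$ applied to $\phi$ times this polynomial, evaluated at $q$. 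Expanding by the Leibniz rule, only the terms in which $j$ of the $n-1$ derivatives hit $\phi$ and the other $n-1-j$ hit the degree-$(n-1-j)$ monomial survive at $u_2 = 0$; the surviving binomial and factorial coefficients simplify to $\frac{(-1)^i}{i!\,a^{n-i}}$, giving the stated formula. A reassuring consistency check is that this final expression contains neither $k$ nor the auxiliary $\alpha_j$, exactly as the presentation-independence in Theorem \ref{degeneratelocalize} demands.
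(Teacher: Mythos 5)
Your proposal is correct and follows essentially the same route as the paper: the same application of Theorem \ref{degeneratelocalize} via (\ref{localizeoneblock}), the same choice of exponents $\alpha_1 = 1$, $\alpha_2 = n$, $\alpha_j = 2^{k+1}$, the same ideal-membership relations and row-reduced determinant (\ref{detB}), the same extraction of the coefficient (\ref{detB1}) of $u_3^{2^k}\cdots u_n^{2^k}$, and the same Leibniz-rule evaluation at $q$. The only cosmetic difference is your clean telescoping presentation of the $u_2^n$ relation with the convention $u_{n+1}:=0$, which matches the paper's formula exactly.
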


\noindent When $n = 2$ we recover Lemma 3.6 of \cite{lishi:2015}, which is the main calculation of the paper and obtained by brute force calculus.

We can now give a direct proof of Theorem \ref{maintheorem} in the case $DX_p = A$ by verifying the identity in Lemma \ref{toprovelemma}.  The term to calculate is $\fut_q(\tilde \Omega, \tilde X) = I_q - \frac{n}{n+1}(\delta + \mu)J_q$.

%

For $J_q$, apply Lemma \ref{maxdegenlemma} with $\phi = \theta_{\tilde X}^{n+1}$.  By (\ref{thetaq}) and (\ref{thetaqprime}),

\[
J_q = \sum_{i=0}^{n-1} \frac{(n+1)!(\theta_p - a\epsilon)^{n+1-i}\epsilon^i}{i!a^{n-i}(n+1-i)!}
\]

\noindent Binomial expansion and interchanging summations yields

\[
J_q = (n+1)! \sum_{j=2}^{n+1} \sum_{i=0}^{n+1-j} \frac{(-1)^{n+1-i-j}}{i!j!(n+1-i-j)!a^{j-1}}\theta^j_p \epsilon^{n+1-j}  + O(\epsilon^n).
\]

The coefficient of $\epsilon^{n+1-j}$ is proportional to $\sum_{i=0}^{n+1-j} (-1)^i {n+1-j \choose i}$, which vanishes by symmetry of binomial coefficients unless $j = n+1$.  It follows that

\[
J_q = \frac{\theta_p^{n+1}}{a^n} + O(\epsilon^n).
\]


On the other hand, to calculate $I_q$ use $\phi = (-\Delta \theta_{\tilde X})\theta_{\tilde X}^n$ in Lemma \ref{maxdegenlemma}.  Again by (\ref{thetaq}), (\ref{thetaqprime}), and (\ref{thetalaplacian}),

\[
I_q = \sum_{i=0}^{n-1} \frac{n!\epsilon^{i-1}}{i!a^{n-i}(n-i)!} \left[ \frac{(n-1)i(\theta_p - a\epsilon)^{n+1-i}}{n-i+1} + a(\theta_p - a\epsilon)^{n-i}\epsilon \right]
\]

\noindent By a similar expansion, the summed second term simplifies to 

\[
\frac{\theta_p^n}{a^{n-1}} + O(\epsilon^n) 
\]

\noindent while the first simplifies to

\[
\frac{(n-1)\theta_p^n}{a^{n-1}} - n(n-1)\theta_p \epsilon^{n-1} + O(\epsilon^n) 
\]

\noindent yielding

\[
I_q = \frac{n\theta_p^n}{a^{n-1}} - n(n-1)\theta_p \epsilon^{n-1} + O(\epsilon^{n}) 
\]

Putting everything together in Lemma \ref{toprovelemma},

\begin{align*}
\fut_p(X, \Omega) &- \fut_q(\tilde X, \tilde \Omega) - \frac{n \delta}{n+1} J_p \\
&= \left[ \frac{an \theta_p^n}{a^n} - \frac{n(\mu + \delta)}{n+1} \frac{\theta_p^{n+1}}{a^n} \right] - \left[\frac{n\theta_p^n}{a^{n-1}} - n(n-1)\theta_p \epsilon^{n-1} - \frac{n(\mu + \delta)}{n+1}\frac{\theta^{n+1}_p}{a^n} + O(\epsilon^{n}) \right] \\
&= n(n-1)\theta_p \epsilon^{n-1} + O(\epsilon^n) 
\end{align*}


\noindent which completes our verification of Theorem \ref{maintheorem} in this special case.


\section{Case II: Multiple Degenerate Zeros}

In this section we complete the proof of Theorem \ref{maintheorem}.  Suppose the linearization of $X$ at $p$ now has multiple Jordan blocks.  Hypothesis ($\star$) means that each Jordan block corresponds to an isolated degenerate zero in the exceptional divisor, and by a change of coordinate we may assume a particular degenerate zero corresponds to the first block.  We extend the computations from Section 3 to calculate the contribution to the Futaki invariant from the first block under the influence of other blocks.  The net contribution from all degenerate zeros is then a sum given by symmetrizing that formula, which we evaluate using Lemma \ref{lemmagk} (proved via integration of meromorphic differentials in the appendix).

Suppose coordinates centered at $p \in \text{Zero}(X)$ have been chosen such that the linearization $DX$ of $X$ is in Jordan form at $p$:

\[
DX_p = \begin{bmatrix} A_1 & 0 & \cdots & 0 \\ 0 & A_2 & \ddots & \vdots \\ \vdots & \ddots & \ddots & 0 \\ 0 & \cdots & 0 & A_m \end{bmatrix}
\]

\noindent Each Jordan block $A_i$ is of the form (\ref{jordanblock}) with diagonal entries $a_i$ and dimension $n_i$.  By Lemma \ref{normalformthrm}, we may assume $X$ is biholomorphic to its linearization near $p$.

Let $s_j = \sum_{k = 1}^j n_k$, so that $s_m = n$.  In the coordinates introduced in Section 3,

\begin{equation}\label{generallift}
\tilde X \big|_{\tilde U_1} = u_1(a_1 + u_2) \frac{\partial}{\partial u_1} + \sum_{j = 1}^m \left[ \sum_{i = s_{j-1} + 1}^{s_j - 1} \left[u_{i + 1} - u_i(u_2 + a_1 - a_j) \right] \frac{\partial}{\partial u_i} - u_{s_j}(u_2 + a_1 - a_j) \frac{\partial}{\partial u_{s_j}} \right]
\end{equation}

\noindent (which of course reduces to the one block formula (\ref{oneblockblowup}) when $m = 1$).  The zero at the origin in $\tilde U_1$ will be denoted $q_1$.

Our first task is to construct the appropriate $B$ to apply Theorem \ref{degeneratelocalize}.  Let $k$ be the natural such that $2^k < n_1 \leq 2^{k + 1}$.  The work of Section 3 constructs a matrix $B_1$ expressing powers of $u_1, \dots, u_{s_1}$ in terms of $\tilde X_1, \dots, \tilde X_{s_1}$.  For $1 < j \leq m$, one checks

\begin{equation}\label{xtildesj}
u_{s_j} = \tilde X_{s_j} \frac{\prod_{i=0}^k [(a_j - a_1)^{2^i} + u_2^{2^i}]}{(a_j - a_1)^{2^{k+1}}} + u_2^{2^{k+1}} u_{s_j} 
\end{equation}

The idea here is to factor $u_{s_j}(u_2^{2^{k+1}} - (a_{s_j} - a_1)^{2^{k+1}})$.  We are then done since $u_2^{2^{k+1}}$ is a known linear combination of $\tilde X_1, \dots, \tilde X_{s_1}$ from Section 3.  For $u_{s_{j-1}} < u_i < u_{s_j}$,

\begin{equation}\label{uigeneral}
u_i = \frac{\prod_{i=0}^k [(a_j - a_1)^{2^i} + u_2^{2^i}]}{(a_j - a_1)^{2^{k+1}}} \tilde X_i + u_2^{2^{k+1}}u_i - \frac{\prod_{i=0}^k [(a_j - a_1)^{2^i} + u_2^{2^i}]}{(a_j - a_1)^{2^{k+1}}} u_{i+1}
\end{equation}

\noindent so that, in light of (\ref{xtildesj}), we may recursively solve to obtain 

\begin{equation}\label{uigeneralB}
u_i = \frac{\prod_{i=0}^k [(a_j - a_1)^{2^i} + u_2^{2^i}]}{(a_j - a_1)^{2^{k+1}}} \tilde X_i + \text{linear combination of $\{\tilde X_1, \dots, \tilde X_{n_1}, \tilde X_{i+1}, \dots, X_{s_j}\}$}.
\end{equation}


It follows that $B$ has the form

\begin{equation}\label{generalB}
B = \begin{bmatrix} B_1 & 0 & 0 & 0 \\ * & B_2 & 0 & 0 \\ * & 0 & \ddots & 0 \\ * & 0 & 0 & B_m \end{bmatrix}
\end{equation}

\noindent where $B_1$ was constructed in the previous section, and each $B_j$ for $j > 1$ is upper triangular with the entries recoverable from (\ref{xtildesj}) and (\ref{uigeneral}).

We have calculated $\det B_1$ in (\ref{detB}), while for $j > 1$,

\[
\det B_j = \left(\frac{\prod_{i=0}^k [(a_j - a_1)^{2^i} + u_2^{2^i}]}{(a_j - a_1)^{2^{k+1}}}\right)^{n_j}
\]

\noindent Clearly the only derivative that may be applied to $\det B_j$ for $j> 1$ is the $u_2$-derivative.  The $i$-th $u_2$-derivative of $\det B_j$ evaluated at $u_2 = 0$ is

\[
\frac{\partial}{(\partial u_2)^i} \det B_j = \frac{(n_j + i -1)!}{(n_j - 1)!(a_j - a_1)^{n_j +i}}
\]

Putting everything into (\ref{localizeoneblock}), the residue of interest

\begin{align}\label{generalresidue}
\textrm{Res}_{q_1} \phi &= \frac{1}{(n_1-1)!} \frac{\partial}{(\partial u_2)^{n_1-1} }\left(\phi \prod_{j=1}^m \det B_j \right) \nonumber\\
&= \frac{1}{(n_1 - 1)!} \sum \binom{n_1 - 1}{i, \mu_1, \dots, \mu_m} \cdot \prod_{j=1}^m \frac{\partial \det B_j}{(\partial u_2)^{\mu_j}} \cdot \frac{\partial \phi}{(\partial u_2)^i}  \bigg|_{u = 0}  \nonumber \\
&= \sum_{i = 0}^{n_1 - 1} \sum_{\mu } \frac{1}{i!} \left( \prod_{j=1}^m \frac{1}{\mu_j!} \frac{\partial \det B_j}{(\partial u_2)^{\mu_j}} \right) \cdot \frac{\partial \phi}{(\partial u_2)^i}  \bigg|_{u = 0} \nonumber \\
&= \sum_{i = 0}^{n_1 - 1} \sum_{\mu } \frac{(-1)^{n_1 +\mu_1 -1}}{i! a_1^{\mu_1 + 1}}  \left( \prod_{j=2}^m \frac{\binom{n_j + \mu_j - 1}{\mu_j}}{(a_j - a_1)^{n_j + \mu_j}} \right) \cdot \frac{\partial \phi}{(\partial u_2)^i}  \bigg|_{u = 0}
\end{align}

\noindent where $\mu = (\mu_1, \dots, \mu_m)$ runs over all partitions of $n_1 - i - 1$ of length $m$.  In the last line we have used (\ref{detB1}).  This generalizes Lemma \ref{maxdegenlemma} ($m = 1$ is the lemma).

We now complete the proof of Theorem \ref{maintheorem} by verifying the identity in Lemma \ref{toprovelemma}.  The holomorphy potential of $\tilde X$ as in (\ref{generallift}) is 

\[
\theta_{\tilde X} = \pi^*\theta_X - \epsilon \left[a_1 + \frac{u_2 + \bar u_2 u_3 + \dots + \bar u_{n-1} u_n + \sum_{j=1}^m \sum_{i=s_{j-1} + 1}^{s_j} (a_j - a_1)|u_i|^2}{1 + |u_2|^2 + \dots + |u_n|^2} \right],
\]

\noindent generalizing (\ref{thetablowup}).  As in (\ref{thetaq}) and (\ref{thetaqprime}), $\theta_{\tilde X}$ satisfies

\begin{gather}\label{generalthetavalues}
\begin{split}
\tilde \theta_{q_1} = \theta_p - a_1\epsilon \\
\frac{\partial \theta_{\tilde X}}{\partial u_2}(q_1) = -\epsilon
\end{split}
\end{gather}

\noindent while all other derivative of $\theta_{\tilde X}$ vanish at $q_1$, and the Laplacian generalizing (\ref{thetalaplacian}) is

\begin{equation}\label{generallaplacian}
\Delta_{\tilde \omega} \theta_{\tilde X} = \text{Tr}(A) - (n-1)(u_2 + a_1).
\end{equation}

\noindent With (\ref{generalthetavalues}) in mind, $J_{q_1}$ is calculated by applying (\ref{generalresidue}) to $\phi =  \theta^{n+1}_{\tilde X}$:

\begin{align*}
J_{q_1} &= \sum_{i = 0}^{n_1 - 1} \sum_{\mu } \frac{(-1)^{n_1 +\mu_1 -1}}{i! a_1^{\mu_1 + 1}}  \left( \prod_{j=2}^m \frac{\binom{n_j + \mu_j - 1}{\mu_j}}{(a_j - a_1)^{n_j + \mu_j}} \right) \cdot \frac{(n+1)! (-\epsilon)^i(\theta_p - a_1 \epsilon)^{n+1-i}}{(n+1-i)!}
 \end{align*}
 
\noindent where $\mu$ is still runs over partitions of $n_j - i - 1$ of length $m$.

Interchanging $1 \leftrightarrow j$ gives the sum over all zeros $\{q_1, \dots, q_m\}$ in the exceptional divisor to be

\begin{align}\label{sumofJresidues}
\sum_j J_{q_j} &= \sum_{j=1}^m \sum_{i = 0}^{n_j - 1} \frac{ (n+1)! (-\epsilon)^{i}(\theta_p - a_j \epsilon)^{n+1-i}}{i!(n+1-i)!}
		 			\sum_{\mu} \frac{(-1)^{n_j +\mu_j -1}}{a_j^{\mu_j + 1}} 
					\left( \prod_{l \neq j}^m \frac{\binom{n_l + \mu_l - 1}{\mu_l}}{(a_l - a_j)^{n_l + \mu_l}} \right) \nonumber \\
&= \sum_{j=1}^m \sum_{i=0}^{n_j-1} \sum_{k=0}^{n+1-i} \frac{(-1)^{n-k}(n+1)!}{k!i!(n+1-i-k)!} \theta_p^k \epsilon^{n+1-k}\sum_{\mu} \frac{(-1)^{n_j +\mu_j}}{a_j^{\mu_j+i+k-n}}  \left( \prod_{l \neq j}^m \frac{\binom{n_l + \mu_l - 1}{\mu_l}}{(a_l - a_j)^{n_l + \mu_l}}  \right) \nonumber \\
&= \sum_{k=0}^{n+1} (-1)^{n-k} \binom{n+1}{k} \theta_p^k \epsilon^{n+1-k} G_k
\end{align}

\noindent where

\[
G_k = \sum_{j=1}^m \sum_{i=0}^{n+1-k} \binom{n+1-k}{i} \sum_{\mu} \frac{(-1)^{n_j +\mu_j}}{a_j^{\mu_j+i+k-n}}  \left( \prod_{l \neq j}^m \frac{\binom{n_l+ \mu_l - 1}{\mu_l}}{(a_l - a_j)^{n_l + \mu_l}}  \right).
\]

\noindent We have simplified the notation by allowing $i$ to run into values that make the partition $\mu$ of $n_j - i - 1$ a partition of a negative number.  The term is understood to be zero when this happens.

\begin{lemma}\label{lemmagk}
\[
G_k =
\begin{cases}
\frac{-1}{\det A} & k = n+1\\
0 & 1 < k < n+1\\
(-1)^n & k = 1
\end{cases}
\]
\end{lemma}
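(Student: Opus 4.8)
The plan is to recognize the $j$-th summand of $G_k$ as a residue at $z=a_j$ of a single meromorphic $1$-form on the Riemann sphere, and then to evaluate $G_k$ by the residue theorem. The starting point is that every combinatorial factor in $G_k$ is a Taylor coefficient of an elementary rational function expanded about $z=a_j$. Writing $z=a_j+w$ and letting $[w^d]$ denote extraction of the coefficient of $w^d$, the binomial series give
\[
[w^{\mu_l}]\,(z-a_l)^{-n_l} = (-1)^{n_l}\binom{n_l+\mu_l-1}{\mu_l}(a_l-a_j)^{-n_l-\mu_l}, \qquad [w^{\mu_j}]\,z^{-1} = (-1)^{\mu_j}a_j^{-\mu_j-1}.
\]
Pulling out the signs $(-1)^{n_j}$ and $\prod_{l\neq j}(-1)^{n_l}=(-1)^{n-n_j}$, whose product is $(-1)^n$, I would rewrite the inner sum over $\mu$ (a weak composition of $n_j-i-1$ into $m$ parts) via the Cauchy product as the coefficient of $w^{n_j-i-1}$ in $z^{-1}\prod_{l\neq j}(z-a_l)^{-n_l}$, where $\tfrac{(-1)^{\mu_j}}{a_j^{\mu_j+i+k-n}} = a_j^{n+1-i-k}\,[w^{\mu_j}]z^{-1}$ supplies the stray power of $a_j$.

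Next I would collapse the remaining sum over $i$. Since $\binom{n+1-k}{i}a_j^{-i}=[w^i]\,(1+w/a_j)^{n+1-k}$, a second Cauchy product converts $\sum_i\binom{n+1-k}{i}a_j^{n+1-i-k}[w^{n_j-1-i}](\cdots)$ into $[w^{n_j-1}]$ of the product, after which $a_j^{n+1-k}(1+w/a_j)^{n+1-k}=(a_j+w)^{n+1-k}$ cancels the pole of $z^{-1}$. This leaves
\[
(\text{$j$-summand of }G_k)=(-1)^n\,[w^{n_j-1}]\,\frac{(a_j+w)^{n-k}}{\prod_{l\neq j}(a_j+w-a_l)^{n_l}}=(-1)^n\,\text{Res}_{z=a_j}\,\Phi_k, \qquad \Phi_k:=\frac{z^{n-k}\,dz}{\prod_{l=1}^m(z-a_l)^{n_l}}.
\]
Summing over $j$ then gives $G_k=(-1)^n\sum_{j=1}^m\text{Res}_{z=a_j}\Phi_k$.

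Finally I would apply the residue theorem on $\C P^1$, namely that the sum of all residues of $\Phi_k$ vanishes. Because $\deg\prod_l(z-a_l)^{n_l}=n$ and nondegeneracy forces the $a_j$ to be distinct and nonzero ($\det A=\prod_l a_l^{n_l}\neq 0$), the only poles besides the $a_j$ are at $\infty$ and, when $k=n+1$, at $0$. For $1\le k\le n$ the form behaves like $z^{-k}\,dz$ near $\infty$, so its residue at infinity is $0$ for $2\le k\le n$ and $-1$ for $k=1$; since the finite residues sum to minus the residue at infinity, this yields $G_k=0$ for $1<k\le n$ and $G_1=(-1)^n$. For $k=n+1$ there is no residue at infinity, while the simple pole at $0$ has residue $\prod_l(-a_l)^{-n_l}=(-1)^n/\det A$, so $\sum_j\text{Res}_{a_j}\Phi_{n+1}=-(-1)^n/\det A$ and hence $G_{n+1}=-1/\det A$. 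The main obstacle is the bookkeeping of the first two steps: correctly matching every binomial coefficient and sign to a Laurent coefficient and collapsing the nested sums into one residue. Once $\Phi_k$ is identified, the three cases are an immediate consequence of the residue theorem.
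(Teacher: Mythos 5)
Your proposal is correct, and it reaches the lemma by a genuinely cleaner route than the paper, though both ultimately rest on the residue theorem for meromorphic forms on $\C P^1$. The paper introduces, for each $k$, a family of $m$ auxiliary forms $\psi_{j}$ (one per eigenvalue $a_j$), each engineered so that $\operatorname{Res}_{a_j}\psi_j$ reproduces the $j$-th summand of $G_k$; since each $\psi_j$ also has unwanted residues at the other points $a_l$, the paper must invoke an exchange symmetry $\operatorname{Res}_{a_j}\psi_l = \operatorname{Res}_{a_j}\psi_j$ (justified only briefly, by ``power series expansion and partition recovery'') and then sum the residue theorem over all $m$ forms, extracting $G_k$ from an identity of the shape $0 = mG_k + m\cdot(\text{known residues})$. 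You instead spend your effort up front: the two Cauchy-product collapses identify the $j$-th summand of $G_k$ exactly as $(-1)^n\operatorname{Res}_{z=a_j}\Phi_k$ for the single universal form
\[
\Phi_k=\frac{z^{n-k}\,dz}{\prod_{l=1}^m(z-a_l)^{n_l}},
\]
and I have checked this identification: the Laurent-coefficient formulas for $(z-a_l)^{-n_l}$ and $z^{-1}$ about $z=a_j$, the sign bookkeeping $(-1)^{n_j}\cdot(-1)^{n-n_j}=(-1)^n$, and the cancellation $z^{n+1-k}\cdot z^{-1}=z^{n-k}$ are all correct, and the truncation of the $i$-sum matches the paper's convention that partitions of negative integers contribute zero. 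After that, one application of the residue theorem, with the residue at $\infty$ (nonzero only for $k=1$, equal to $-1$) and the residue at $0$ (present only for $k=n+1$, equal to $(-1)^n/\det A$), yields all three cases at once, using that hypothesis ($\star$) and nondegeneracy make the $a_j$ distinct and nonzero. What your approach buys: no symmetrization lemma, no factor-of-$m$ bookkeeping, a transparent origin for the sign $(-1)^n$ and for $\det A=\prod_l a_l^{n_l}$, and a uniform treatment of the three cases; what the paper's buys is that its forms $\psi_j$ can be written down by direct inspection of $G_k$ without any generating-function manipulation, at the price of the residue-exchange argument it leaves mostly unproved.
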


\noindent See the appendix in Section \ref{appendix} for a proof via integration of meromorphic differentials.  It follows from Lemma \ref{lemmagk} that 

\[
\sum_j J_{q_j} = \frac{\theta_p^{n+1}}{\det A} + O(\epsilon^n)
\]

Likewise, to compute the contribution $I_{q_j}$ from each fixed point $q_j$, use $\phi = (-\Delta \theta_{\tilde X})\theta_{\tilde X}^n$ in (\ref{generalresidue}) along with (\ref{generalthetavalues}) and (\ref{generallaplacian}).  The contribution is

\begin{align}\label{sumofIresidues}
\sum_j I_{q_j} = \sum_{j=1}^m \sum_{i = 0}^{n_j - 1} & \sum_{\mu } \frac{(-1)^{n_1 +\mu_1 -1}}{i! a_j^{\mu_1 + 1}} \left( \prod_{l \neq j}^m \frac{\binom{n_j + \mu_j - 1}{\mu_j}}{(a_j - a_l)^{n_j + \mu_j}} \right) \\
& \cdot \left[ \frac{-n!(\text{Tr}(A) - (n-1)a_j) (-\epsilon)^i(\theta_p - a_j \epsilon)^{n-i}}{(n-i)!} + \frac{i(n-1)n! (-\epsilon)^{i-1}(\theta_p - a_j \epsilon)^{n-i+1}}{(n-i+1)!} \right] \nonumber
\end{align}

This summation is of the form

\[
\sum_{k=1}^n (-1)^{n-k+1} \binom{n}{k} \theta_p^k \epsilon^{n-k} [G_k' + G_k'' + G_k'''] + O(\epsilon^n)
\]

\noindent where

\begin{align*}
G_k' &= \text{Tr}(A) \sum_{j=1}^m \sum_{i=0}^{n-k} \binom{n-k}{i} \sum_{\mu} \frac{(-1)^{n_j +\mu_j}}{a_j^{\mu_j+i+k-n+1}}  \left( \prod_{l \neq j}^m \frac{\binom{n_l+ \mu_l - 1}{\mu_l}}{(a_l - a_j)^{n_l + \mu_l}}  \right) \\
G_k'' &= -(n-1) \sum_{j=1}^m \sum_{i=0}^{n-k} \binom{n-k}{i} \sum_{\mu} \frac{(-1)^{n_j +\mu_j}}{a_j^{\mu_j+i+k-n}}  \left( \prod_{l \neq j}^m \frac{\binom{n_l+ \mu_l - 1}{\mu_l}}{(a_l - a_j)^{n_l + \mu_l}}  \right)\\
G_k''' &= -(n-1) \sum_{j=1}^m \sum_{i=1}^{n-k+1} \binom{n-k}{i-1} \sum_{\mu} \frac{(-1)^{n_j +\mu_j}}{a_j^{\mu_j+i+k-n}}  \left( \prod_{l \neq j}^m \frac{\binom{n_l+ \mu_l - 1}{\mu_l}}{(a_l - a_j)^{n_l + \mu_l}}  \right)
\end{align*}

\noindent and, as with $G_k$ above, any index $i$ producing a partition of a negative number contributes zero.

Clearly $G'_k = \text{Tr}(A)G_{k+1}$, while for each $1 \leq k \leq n$, combinatorial manipulation shows

\[
G_k'' + G_k''' = -(n-1)G_k.
\]

\noindent By Lemma \ref{lemmagk}, (\ref{sumofIresidues}) evaluates to

\[
\sum_j I_{q_j} = \frac{\text{Tr}(A)}{\det A} \theta^n_p - n(n+1)\theta_p \epsilon^{n-1} + O(\epsilon^n).
\]

Putting these results into Lemma \ref{toprovelemma},

\begin{align*}
&\fut_p (\Omega, X) - \sum_{j=1}^m \left(I_{q_j} - \frac{n \tilde \mu}{n+1} J_{q_j} \right) - \frac{n\delta}{n+1} J_p(\Omega, X)\\
&= \left(\frac{\text{Tr}(A)\theta_p^n}{\det A} - \frac{n(\mu+ \delta)}{n+1} \frac{\theta_p^{n+1}}{\det A} \right) - \left(\frac{\text{Tr}(A)\theta^n_p}{\det A}  - n(n+1)\theta_p \epsilon^{n-1} - \frac{n \tilde \mu}{n+1} \frac{\theta_p^{n+1}}{\det A} + O(\epsilon^n) \right) \\
&= n(n-1)\theta_X(p)\epsilon^{n-1} + O(\epsilon^n)
\end{align*}

\noindent which concludes the verification of Theorem \ref{maintheorem}.

\section{Normal Forms at Singularities}

Sections 3 and 4 rely on the assumption that $X$ is holomorphically equivalent to its linearization on a neighborhood of $p$, which is in general not true (not even smoothly).  Our main result of this section is that for the purposes of proving Theorem \ref{maintheorem} it is sufficient to assume such a normal form.

We recall a well-known condition originally due to Poincar\'e under which this assumption is true.  A vector $\lambda = (\lambda_1, \dots, \lambda_n) \in \C^n$ is called \emph{resonant} if there exists a relation 

\[
\lambda_k = \sum m_i \lambda_i
\]

\noindent where $m_i$ are nonnegative integers and $\sum m_i \geq 2$.  The vector $\lambda$ is said to \emph{belong to the Poincar\'e domain} if the convex hull of $\lambda_1, \dots, \lambda_n$ in $\C$ does not contain the origin.

\begin{theorem}[Poincar\'e \cite{arnold:1988}]\label{poincare}
If the eigenvalues of the linear part of a holomorphic vector field at a zero belong to the Poincar\'e domain and are nonresonant, then the vector field is biholomorphically equivalent to its linearization on a neighborhood of that zero.
\end{theorem}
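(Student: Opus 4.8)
The plan is to follow Poincar\'e's classical normalization scheme in two stages: first produce a \emph{formal} tangent-to-identity change of coordinates conjugating $X$ to its linear part, then invoke the geometry of the Poincar\'e domain to bound the divisors that arise and prove convergence by the method of majorants. Choosing coordinates centered at the zero, I would write the field as $X = \sum_j \left(\sum_i A_{ij} z_i + f_j(z)\right)\frac{\partial}{\partial z_j}$, where $A = DX_p$ has eigenvalues $\lambda = (\lambda_1,\dots,\lambda_n)$ and each $f_j$ vanishes to second order, and seek a biholomorphism $z = w + \phi(w)$ with $\phi = O(|w|^2)$ carrying $X$ to the linear field $\sum_{i,j} A_{ij} w_i \frac{\partial}{\partial w_j}$.

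Expanding degree by degree, the unknown homogeneous part $\phi_k$ of degree $k \geq 2$ must satisfy a \emph{homological equation} $L_A \phi_k = R_k$, where $L_A \phi := [Aw, \phi]$ is the Lie-bracket operator on degree-$k$ vector fields and $R_k$ is a known polynomial built from $f$ and the previously determined $\phi_l$ with $l < k$. On the monomial basis $w^m \frac{\partial}{\partial w_j}$ the operator $L_A$ is diagonal (upper-triangular in the non-semisimple case) with diagonal entry $\langle m,\lambda\rangle - \lambda_j = \sum_i m_i \lambda_i - \lambda_j$. The nonresonance hypothesis asserts precisely that these entries are nonzero for all $|m| \geq 2$, so $L_A$ is invertible at every order; this yields a unique formal normalizing series $\phi$.

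The substance of the theorem is convergence, and this is where the Poincar\'e domain enters. Since $0$ does not lie in the convex hull of $\{\lambda_1,\dots,\lambda_n\}$, there is a real-linear functional $\ell$ on $\C$ and a $\delta > 0$ with $\ell(\lambda_i) \geq \delta$ for all $i$; hence $\ell(\langle m,\lambda\rangle) \geq \delta|m|$ grows linearly in $|m|$ while $\ell(\lambda_j)$ remains bounded. I would use this to establish a divisor bound $|\langle m,\lambda\rangle - \lambda_j| \geq c\,|m|$ for all but finitely many $(m,j)$; the remaining finitely many divisors are nonzero by nonresonance, so altogether $|\langle m,\lambda\rangle - \lambda_j| \geq c > 0$ \emph{uniformly}, with the divisors bounded away from zero rather than accumulating there. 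This uniform lower bound is exactly the feature distinguishing the Poincar\'e domain from the Siegel domain, where small divisors force a delicate number-theoretic analysis.

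With the divisors uniformly bounded below, convergence follows by the method of majorants: I would dominate each $f_j$ by a geometric majorant $\Phi = C\sum_\ell (z_1 + \dots + z_n)^\ell$ (legitimate since $f$ is holomorphic near $0$) and replace every inverse divisor by the constant $1/c$, producing a scalar functional equation whose unique holomorphic solution dominates the coefficients of $\phi$ term by term. Solving that scalar equation via the implicit function theorem gives a positive radius of convergence, so $\phi$ is holomorphic on a neighborhood of $0$; since $\phi = O(|w|^2)$, the map $w \mapsto w + \phi(w)$ is tangent to the identity and hence a local biholomorphism, establishing the desired equivalence. I expect the divisor estimate and the bookkeeping of the majorant recursion to be the main obstacle, since the formal step is routine linear algebra, whereas deriving the uniform bound from the convex-hull condition and then verifying the majorization inequalities order by order is where the real care is required.
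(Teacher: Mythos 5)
The paper itself offers no proof of this statement: it is quoted as a classical theorem of Poincar\'e, with Arnold \cite{arnold:1988} as the reference. So your proposal can only be measured against the classical argument, and in outline it is exactly that argument: formal linearization through the homological equation $L_A\phi_k = R_k$, invertibility of $L_A$ at each order from nonresonance, and convergence by majorants using a separating real-linear functional for the Poincar\'e domain. For a diagonalizable linear part the sketch is correct and essentially complete.

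There is, however, one genuine gap, and it occurs in precisely the case this paper needs: here $DX_p$ is a Jordan block (or several), so $A$ is \emph{not} semisimple. You acknowledge at the formal step that $L_A$ is then only triangular on the monomial basis, but your convergence step --- ``replace every inverse divisor by the constant $1/c$'' --- silently treats $L_A$ as diagonal. Writing $A = D + N$ with $N \neq 0$ nilpotent, one has $L_A^{-1} = (I + L_D^{-1}L_N)^{-1}L_D^{-1}$, and $L_N$ acting on degree-$k$ vector monomials has entries of size comparable to $k$ (the coefficients $m_i$ appear when differentiating $w^m$), so its effect does not vanish as the degree grows and the naive majorant domination fails. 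Worse, your reduction of the divisor estimate to the uniform bound $|\langle m,\lambda\rangle - \lambda_j| \ge c$ discards exactly what saves the argument: the Poincar\'e condition gives linear growth $|\langle m,\lambda\rangle - \lambda_j| \ge c\,|m|$ for large $|m|$, which matches the linear growth of $L_N$. The standard repair is to first conjugate by $\mathrm{diag}(1,\epsilon,\epsilon^2,\dots)$ so that the off-diagonal Jordan entries become $\epsilon$; then $\|L_D^{-1}L_N\| = O(\epsilon/c)$ uniformly in the degree, so for $\epsilon$ small one gets $\|L_A^{-1}\| \le 2\,\|L_D^{-1}\|$ at every order, and your majorant recursion goes through as written. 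With that insertion, and retaining the $c\,|m|$ growth rather than only the constant lower bound, the proposal becomes a complete proof of the statement as used in this paper.
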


This condition applies to the one non-trivial Jordan block needed in Li-Shi \cite{lishi:2015} and is key to simplifying their calculation.  It remains valid more generally for the case of a single Jordan block in Section 3, but does not apply in Section 4 in general.

We now show that it is sufficient to prove Theorem \ref{maintheorem} under the assumption $X$ is locally biholomorphic to its linearization.  To be precise, let $Y$ be a holomorphic vector field on $M$ with an isolated nondegenerate zero at $p$ satisfying ($\star$), so that in coordinates centered at $p$,

\begin{equation}\label{localX}
Y = \sum_j (A_{ij}z_i + O(z_kz_l) )\frac{\partial}{\partial z_j}
\end{equation}

Let $X$ denote the linearization of $Y$, defined on a neighborhood of $p$:

\[
X = \sum (A_{ij}z_i)\frac{\partial}{\partial z_j}
\]

\noindent The extension $\tilde X$ of $X$ to $\tilde M = Bl_pM$ is defined on a neighborhood of the exceptional divisor (see (\ref{xtildelocalcoords}) for the explicit local formula).

\begin{lemma}\label{normalformthrm}
Let $X, Y, \tilde X$, and $\tilde Y$ be as above.  The zero loci of $\tilde X$ and $\tilde Y$ on a neighborhood of the exceptional divisor $E$ agree, and for each isolated zero $q \in E$,

\[
\fut_q(\tilde X, \tilde \Omega) = \fut_q(\tilde Y, \tilde \Omega)
\]

\end{lemma}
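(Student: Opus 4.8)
The plan is to show that passing from the nonlinear field $Y$ to its linearization $X$ changes neither the zero locus of the lift near $E$ nor any local Futaki-Morita contribution, by exploiting that both invariants are computed from \emph{finite-order jet data} at each $q \in E$. The conceptual point is that the degenerate localization formula (\ref{degeneratelocalization}), and equivalently the residue formula (\ref{localizeoneblock}), extracts only finitely many derivatives of $\phi(\theta)$ and $\det B$ at the zero $q$; hence two vector fields agreeing to sufficiently high order at $q$ must produce identical residues. The strategy is therefore to (i) identify the zero loci of $\tilde X$ and $\tilde Y$ on a neighborhood of $E$, and (ii) at each isolated zero $q$, bound the order of the jet of $\tilde Y$ that can influence $\fut_q$ and verify that the higher-order terms $O(z_k z_l)$ in (\ref{localX}) lift to terms that do not affect this finite jet.

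First I would analyze the zero loci. Writing $Y = X + R$ with $R = \sum_j O(z_k z_l)\,\partial_{z_j}$ of order $\ge 2$, I would examine the lift (\ref{xtildelocalcoords}) in each chart $\tilde U_i$. In the chart coordinates $(u_1,\dots,u_n)$, the exceptional divisor is $\{u_i = 0\}$ (the former $z_i = 0$ slice), and along $E$ each $z_j = u_i u_j$ vanishes. Since every component of $R$ vanishes to order two in the $z$'s, after the blowup substitution each $R_j$ acquires at least two powers of $u_i$; dividing by $u_i$ in (\ref{xtildelocalcoords}) still leaves the lifted correction $\tilde R$ vanishing \emph{along} $E$. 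Consequently $\tilde X$ and $\tilde Y$ agree to first order transverse to $E$ and in particular share the same zero set on $E$: the zeros are exactly the eigendirections of $A = DX_p = DY_p$, determined entirely by the linear part. This establishes the first assertion and identifies the isolated zeros $q$ with those analyzed in Sections 3 and 4.

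Next, at a fixed isolated zero $q$, I would compare the residues. The holomorphy potential $\theta_{\tilde Y}$ and the matrix $B$ solving the Hilbert-style relations (\ref{hilbert}) both depend on $\tilde Y$; I would argue that $\theta_{\tilde Y}$ and $\theta_{\tilde X}$ have the same jet at $q$ up to the finite order appearing in (\ref{localizeoneblock}), and similarly that the correction $\tilde R$ can be absorbed into a modification of $B$ that does not alter the relevant derivative of $\det B$ at $q$. Concretely, since $\tilde R$ vanishes along $E$ and $q \in E$, its contribution to any finite-order Taylor coefficient in the transverse direction is controlled; one checks that the orders $\alpha_j$ chosen in Section 3 are large enough that the finitely many derivatives $\partial_{u}^{\alpha - 1}$ evaluated at $q$ see only the linear data. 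Because $\fut_q = I_q - \tfrac{n\mu}{n+1}J_q$ is assembled from $\phi = (-\Delta\theta)\theta^n$ and $\phi = \theta^{n+1}$, agreement of the relevant jets forces $\fut_q(\tilde Y,\tilde\Omega) = \fut_q(\tilde X,\tilde\Omega)$.

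The main obstacle I anticipate is the second step: controlling how the nonlinear remainder $\tilde R$ propagates through the \emph{solution} $B$ of (\ref{hilbert}), since $B$ is defined implicitly and a perturbation of $\tilde X$ perturbs $B$ in a way that is not obviously confined to high order. The delicate part is to show that although $B$ itself may change, the specific mixed partial derivative of $\phi \det B$ extracted by the residue formula is invariant — equivalently, that the degenerate localization residue (\ref{degeneratelocalization}) depends on $\tilde Y$ only through a finite jet whose relevant part is purely linear. Making this jet-dependence precise, rather than merely plausible, is where the real work lies, and I would handle it by a direct order-counting argument: tracking the minimal power of $u_i$ carried by $\tilde R$ and comparing it against the total derivative order $|\alpha| - n$ demanded by (\ref{localizeoneblock}).
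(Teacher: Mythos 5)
Your first step (identification of the zero loci) is correct and is essentially the paper's own argument: the lifted remainder vanishes along $E$, any zero of $\tilde Y$ near $E$ must lie on $E$ because $p$ is an isolated zero of $Y$, and the zeros on $E$ are the eigendirections of $DX_p = DY_p$. The gap is in your second step, and it is a real one. The mechanism you propose --- that the residue formula (\ref{localizeoneblock}) sees only a finite jet of the vector field at $q$, so one should compare the vanishing order of the perturbation $\tilde R$ against the total derivative order $|\alpha| - n$ --- cannot work, because that comparison fails decisively. In the chart $\tilde U_1$ one has only $\tilde Y_1 = \tilde X_1 + O(u_1^2)$ and $\tilde Y_i = \tilde X_i + O(u_1)$ for $i \geq 2$: the perturbation vanishes to order \emph{one} in a single variable, whereas the total derivative order extracted by (\ref{localizeoneblock}) is enormous (in Section 3 the exponents are $\alpha_2 = n$ and $\alpha_j = 2^{k+1}$ for $j \geq 3$). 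So $\tilde X$ and $\tilde Y$ emphatically do \emph{not} have the same jet at $q$ to the order the residue formula reads off, and any total-order-counting argument is dead on arrival. Your remark that ``the orders $\alpha_j$ chosen in Section 3 are large enough'' has the logic backwards: larger $\alpha_j$ means more derivatives and hence \emph{more} sensitivity to low-order perturbations. What actually saves the computation is directional, not total-order, information: the construction has $\alpha_1 = 1$, so the mixed partial in (\ref{localizeoneblock}) contains \emph{no} $u_1$-derivatives and is evaluated at $u_1 = 0$; consequently any discrepancy divisible by $u_1$ in $\theta$, $\Delta\theta$, or $\det B$ is invisible to the residue. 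This is the pivot of the paper's proof, and your proposal never isolates it.

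Even granting that directional observation, the difficulty you correctly flag --- how the perturbation propagates through the implicitly defined matrix $B$ --- requires an actual construction, and here the paper uses an ingredient absent from your proposal: Poincar\'e's theorem (Theorem \ref{poincare}) applied to the \emph{first Jordan block alone}, producing holomorphic coordinates in which $Y_1, \dots, Y_{n_1}$ coincide exactly (not merely to high order) with $X_1, \dots, X_{n_1}$, hence $\tilde Y_i = \tilde X_i$ exactly for $i \leq n_1$. This yields a matrix $C$ for $\tilde Y$ whose first block equals $B_1$, so the relation $u_1 = \sum_{j \leq n_1} c_{1j} \tilde Y_j$ holds with $\alpha_1 = 1$; the $O(u_1)$ corrections in the remaining components can then be re-expressed as holomorphic combinations of $\tilde Y_1, \dots, \tilde Y_{n_1}$ (precisely because $u_1$ lies in the ideal generated by them), preserving the block-triangular shape (\ref{generalB}) and giving $\det C = \det B$ on the nose. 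Combined with $\theta_{\tilde Y} = \theta_{\tilde X} + O(u_1)$ and $\Delta_{\tilde\omega}\theta_{\tilde Y} = \Delta_{\tilde\omega}\theta_{\tilde X} + O(u_1)$, this forces the two residues, and hence the two local Futaki contributions, to agree. Without this partial linearization (or some substitute producing $C$ with the same exponents, in particular $\alpha_1 = 1$, and with determinant equal to $\det B$ modulo $O(u_1)$), your order-counting step cannot be completed; as it stands the proposal has a genuine gap at the heart of the lemma.
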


\begin{proof}

In coordinates centered at $p$, $Y$ has the form in (\ref{localX}).  As $p$ is an isolated and nondegenerate zero of $Y$, it is also isolated and nondegenerate for $X$.  From expression (\ref{xtildelocalcoords}) it is clear that any zeros of $\tilde Y$ in a neighborhood of E must be on E (one must have $Y_1 = \dots = Y_n = 0$, and this can only happen on $\pi^{-1}(p)$).  Likewise for the zeros of $\tilde X$.  But the zeros on the exceptional divisor correspond with the eigenspaces of $DX_p = DY_p$, and so the zero loci agree as claimed.  In particular, the zeros of $\tilde Y$ are isolated iff the zeros of $\tilde X$ are isolated.

The local Futaki invariant for $X$ was calculated in Section 4 using (\ref{generalresidue}).  We will show that adding in higher order terms to form $Y$ changes $\theta_{\tilde X}, \Delta \theta_{\tilde X}$, and $\det B$ each by $O(u_1)$, so that

\begin{align}\label{normalwanttoshow}
\begin{split}
-(\Delta \theta_{\tilde Y})\theta_{\tilde Y}^n \det C &= -(\Delta \theta_{\tilde X})\theta_{\tilde X}^n \det B + O(u_1)  \\
\theta_{\tilde Y}^{n+1} \det C &= \theta_{\tilde X}^{n+1} \det B + O(u_1)
\end{split}
\end{align}

\noindent and moreover still no $u_1$ derivatives will be involved.  The lemma then follows, as $I_q$ and $J_q$ are calculated via (\ref{degeneratelocalization}) with no $u_1$-derivatives applied to $-(\Delta \theta_{\tilde Y})\theta_{\tilde Y}^n \det C$ and $\theta_{\tilde Y}^{n+1} \det C$, respectively, followed by evaluation involving $u_1 = 0$.

By assumption,

\[
Y_i = X_i + O(z_kz_l)
\]

\noindent for each $i = 1, \dots, n$.  Using the coordinates in Section 3 in which the zero of $\tilde Y$ is at the origin in $\tilde U_1$, by (\ref{xtildelocalcoords}) 
\[
\tilde Y_1 = \tilde X_1 + O(u_1^2)
\]

\noindent while for $2 \leq i \leq n$,
\begin{align*}
\tilde Y_i &= \frac{1}{u_1}[A_{ij}z_j + O(u_1^2) - u_i(A_{ij}z_j + O(u_1^2))] \\
&= \tilde X_i + O(u_1)
\end{align*}

Straightforward calculation then gives
\begin{align}\label{normallaplacian}
\Delta_{\tilde \omega} \theta_{\tilde Y} &= \sum_i \frac{\partial \tilde Y_i}{\partial u_i} \nonumber \\
 &= \frac{\partial}{\partial u_1} [\tilde X_1 + O(u_1^2)] + \sum_{i > 1} \frac{\partial}{\partial u_i} [\tilde X_i + O(u_1)] \nonumber\\
 &= \Delta_{\tilde \omega} \theta_{\tilde X} + O(u_1)
\end{align}

Next we consider the holomorphy potential itself.  On $U$,

\[
\bar \partial \theta_X = \bar \partial \theta_Y + O(z_l z_k)
\]

\noindent so that
\begin{align}\label{normaltheta}
\bar \partial \theta_{\tilde X} &= \pi^* \bar \partial \theta_X - \bar \partial ( \tilde X \log h) \nonumber \\
\bar \partial \theta_{\tilde X} &= \pi^* \bar \partial (\theta_X + O(z_kz_l)) - \bar \partial ( \tilde Y \log h) + \bar \partial (O(u_1)) \nonumber\\
\theta_{\tilde X}  &= \theta_{\tilde Y}  + O(u_1)
\end{align}

Lastly, let $B = (b_{ij})$ be the matrix of holomorphic functions near $q$ constructed in Sections 3 and 4 satisfying $u_j^{\alpha_j} = \sum_j b_{ij} \tilde X_j$.  We will show that there is similarly a matrix $C = (c_{ij})$ such that $u_j^{\alpha_j} = \sum_j c_{ij} \tilde Y_j$ for the same values of $\alpha_j$ (and in particular $\alpha_1 = 1$), such that 

\begin{equation}\label{normaldet}
\det C = \det B.
\end{equation}

Theorem \ref{poincare} does adapt to the first Jordan block of $DY_p$, providing a holomorphic coordinate system in which $Y_1, \dots, Y_{n_1}$ agree with $X_1, \dots, X_{n_1}$, and consequently $\tilde Y_1, \dots, \tilde Y_{n_1}$ agree with $\tilde X_1, \dots, \tilde X_{n_1}$ by (\ref{xtildelocalcoords}).  As a result, the matrix $C$ sought begins with a block $C_1$ identical to block $B_1$ in (\ref{generalB}).

For the $u_i$ where $i> n_1$, it is known from (\ref{uigeneralB}) that

\[
u_i = \frac{\prod_{i=0}^k [(a_j - a_1)^{2^i} + u_2^{2^i}]}{(a_j - a_1)^{2^{k+1}}} \tilde X_i + \text{linear combination of }\{\tilde X_1, \dots, \tilde X_{n_1}, \tilde X_{i+1}, \dots, X_{s_j}\}.
\]

\noindent Using that $\tilde Y_j = \tilde X_j + O(u_1)$ and that $u_1 = \sum_{j=1}^{n_1} c_{1j} \tilde Y_j$,

\[
u_i = \frac{\prod_{i=0}^k [(a_j - a_1)^{2^i} + u_2^{2^i}]}{(a_j - a_1)^{2^{k+1}}}\tilde Y_i + \text{linear combination of }\{\tilde Y_1, \dots, \tilde Y_{n_1}, \tilde Y_{i+1}, \dots, Y_{s_j}\}.
\]

\noindent It follows that the diagonal entries for $C_j$ are the same as the diagonal entries for $B_j$ for $j>1$ and that $C$ has the same form of $B$ in (\ref{generalB}), which is sufficient to conclude $\det C = \det B$.

With (\ref{normallaplacian}), (\ref{normaltheta}), and (\ref{normaldet}), we have shown (\ref{normalwanttoshow}) and the proof is complete.

%
%
%
%
%
%
%
%
%
%
%

\end{proof}

\section{Appendix: Proof of Lemma \ref{lemmagk}}\label{appendix}

We now prove

\begin{lemma}[Lemma \ref{lemmagk}]  Let 
\[
G_k = \sum_{j=1}^m \sum_{i=0}^{n+1-k} \binom{n+1-k}{i} \sum_{\mu} \frac{(-1)^{n_j +\mu_j}}{a_j^{\mu_j+i+k-n}}  \left( \prod_{l \neq j}^m \frac{\binom{n_l+ \mu_l - 1}{\mu_l}}{(a_l - a_j)^{n_l + \mu_l}}  \right)
\]

\noindent Then

\[
G_k =
\begin{cases}
\frac{-1}{\det A} & k = n+1\\
0 & 1 < k < n+1\\
(-1)^n & k = 1
\end{cases}
\]
\end{lemma}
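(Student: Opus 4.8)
The plan is to realize the entire expression $G_k$ as a sum of residues of a single meromorphic $1$-form on $\C P^1$, after which the residue theorem collapses the computation to two elementary residues at $0$ and $\infty$. The starting point is to recognize the inner structure of $G_k$ as coefficient extraction. Fix $j$. Among the $\mu$ with $\sum_{l\neq j}\mu_l = s$, the factor $\sum_\mu \prod_{l\neq j}\frac{\binom{n_l+\mu_l-1}{\mu_l}}{(a_l-a_j)^{n_l+\mu_l}}$ is exactly the coefficient of $w^s$ in the Taylor expansion of $g_j(z):=\prod_{l\neq j}(a_l-z)^{-n_l}$ about $z=a_j$, where $w=z-a_j$; this follows from the binomial series $(a_l-a_j-w)^{-n_l}=(a_l-a_j)^{-n_l}\sum_{\mu_l\geq 0}\binom{n_l+\mu_l-1}{\mu_l}(w/(a_l-a_j))^{\mu_l}$. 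Writing $\mu_j=n_j-1-i-s$ and tracking the powers of $a_j$ together with the sign $(-1)^{n_j+\mu_j}$, the explicit $i$-dependence in the $a_j$-exponent cancels, leaving each $j$-summand a function of $s$, $i$, and $[w^s]g_j$ alone.

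First I would collapse the sum over $i$. For each fixed $s$ the $i$-sum is a truncated alternating binomial sum $\sum_{i=0}^{N_s}\binom{n+1-k}{i}(-1)^i$ with $N_s=\min(n+1-k,\,n_j-1-s)$, which the identity $\sum_{i=0}^N\binom{M}{i}(-1)^i=(-1)^N\binom{M-1}{N}$ evaluates. When the truncation occurs at $n+1-k$ the result is $\binom{n-k}{n+1-k}=0$, so only the regime $N_s=n_j-1-s$ survives. After substituting $r=n_j-1-s$ and simplifying the signs, the $j$-summand becomes $(-1)^{n_j}$ times a Cauchy-product coefficient, namely the coefficient of $w^{n_j-1}$ in $(a_j+w)^{n-k}\,g_j(a_j+w)$.

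The crucial step is then to rewrite this coefficient as a residue. Since $[w^{n_j-1}]h(w)=\text{Res}_{w=0}\,h(w)\,w^{-n_j}\,dw$ and $(a_j-z)^{n_j}=(-1)^{n_j}(z-a_j)^{n_j}$, the prefactor $(-1)^{n_j}$ is absorbed and the $j$-summand equals $\text{Res}_{z=a_j}\omega_k$ for the single globally defined form
\[
\omega_k=\frac{z^{\,n-k}}{\prod_{l=1}^m (a_l-z)^{n_l}}\,dz .
\]
Summing over $j$ gives $G_k=\sum_j \text{Res}_{z=a_j}\omega_k$. I would finish by invoking the residue theorem on $\C P^1$, which forces $G_k=-\text{Res}_{z=0}\omega_k-\text{Res}_{z=\infty}\omega_k$. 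The residue at $0$ is nonzero only when $n-k<0$, i.e. $k=n+1$, where it equals $1/\det A$ using $\prod_l a_l^{n_l}=\det A$; the residue at $\infty$, computed in the coordinate $t=1/z$ where $\omega_k=-t^{k-2}\prod_l(a_lt-1)^{-n_l}\,dt$, is nonzero only when $k=1$, where it equals $(-1)^{n+1}$. Reading off the three cases yields $G_{n+1}=-1/\det A$, $G_k=0$ for $1<k<n+1$, and $G_1=(-1)^n$, as claimed.

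The main obstacle I anticipate is the bookkeeping in the first two steps: correctly handling the constraint $\sum_l \mu_l=n_j-i-1$ together with the nonnegativity of $\mu_j$, and verifying that the sign $(-1)^{n_j+\mu_j}$ and the $a_j$-power conspire so that the surviving sum is precisely the clean convolution coefficient above. Once the single form $\omega_k$ is identified, the remainder is a routine application of the residue theorem, the only care being the sign convention for the residue at infinity.
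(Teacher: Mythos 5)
Your proposal is correct in substance and takes a genuinely different---and cleaner---route than the paper. The paper also argues via meromorphic differentials on the Riemann sphere, but it constructs a separate form $\psi_j$ (resp.\ $\psi_{j,k}$) for each index $j$, keeping the partition sums over $\mu$ intact inside each form; it then needs the auxiliary symmetry claim $\text{Res}_{a_j}\psi_l = \text{Res}_{a_j}\psi_j$ for $l \neq j$ (justified by ``power series expansion and partition recovery''), and it runs the sum-of-residues argument separately in each of the three cases $k=n+1$, $1<k<n+1$, $k=1$. You instead collapse the partition sums first, via the binomial series for $\prod_{l\neq j}(a_l-a_j-w)^{-n_l}$, and identify the entire $j$-summand of $G_k$ as $\text{Res}_{z=a_j}$ of the single global form $\omega_k = z^{n-k}\prod_{l}(a_l-z)^{-n_l}\,dz$; one application of the residue theorem then yields all three cases at once from the residues at $0$ and $\infty$. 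I checked the key identification: with $s=\sum_{l\neq j}\mu_l$ the $a_j$-exponent is $n-k+1+s-n_j$ (independent of $i$, as you note), and the identity $(-1)^{1+s}\sum_{i=0}^{N_s}(-1)^i\binom{n+1-k}{i} = (-1)^{n_j}\binom{n-k}{n_j-1-s}$ holds in every regime, so your $\omega_k$ has exactly the residues $G_k$ requires. What your method buys is a shorter, unified argument with no symmetrization step; what the paper's buys is that the combinatorial data stays visible term by term, which is what its appendix leans on.

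One small repair is needed in your bookkeeping: the intermediate claim that ``when the truncation occurs at $n+1-k$ the result is $\binom{n-k}{n+1-k}=0$'' fails precisely when $k=n+1$, since then $\binom{n-k}{n+1-k}=\binom{-1}{0}=1$; taken literally, that regime analysis would force $G_{n+1}=0$, contradicting the lemma. The fix is immediate: for $k=n+1$ the $i$-sum consists of the single term $i=0$, and interpreting $(a_j+w)^{n-k}=(a_j+w)^{-1}$ as a binomial series with $\binom{-1}{r}=(-1)^r$ reproduces exactly your Cauchy-product coefficient, so the identification with $\text{Res}_{a_j}\omega_{n+1}$, and hence $G_{n+1}=-1/\det A$ from the simple pole of $\omega_{n+1}$ at $z=0$, stands as you claimed.
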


The proof is by integration of meromorphic differentials over the Riemann sphere, where the residues will correspond to contributions to $G_k$.  If $k = n+1$, consider

\[
\psi_j = \sum_{\mu \perp n_j - 1} \frac{(-1)^{n_j +\mu_j}}{a_j^{\mu_j + \sum_{l \neq j} \mu_l} \cdot z^{1 - \sum_{l \neq j} \mu_l}(z - a_j)} \left( \prod_{l \neq j}^m \frac{\binom{n_l+ \mu_l - 1}{\mu_l}}{(a_l - z)^{n_l + \mu_l}}  \right) \; dz.
\]

There are always poles at $z = a_1, \dots, a_m$.  There is also a pole at $z = 0$ when $\sum_{l \neq j} \mu_l = 0$, i.e. the partition is $\mu_j = n_j - 1$ and $\mu_l = 0$ for $l \neq j$.  There is never a pole at infinity.  It is immediate that 

\[
\sum_{j=1}^m \text{Res}_{a_j} \psi_{j} = G_{n+1}
\]
\noindent and
\[
\text{Res}_0 \psi_{j} = \frac{1}{\prod_{l=1}^m a_l^{n_l}} = \frac{1}{\det A},
\]

\noindent while standard power series expansion and a consideration of partition recovery shows

\[
\text{Res}_{a_j} \psi_{l} = \text{Res}_{a_j} \psi_{j}
\]

\noindent for $l \neq j$.  As the sum of residues over a closed Riemann surface is zero,

\[
0 = \sum_{j=1}^m \left[\text{Res}_0 \psi_j + \sum_{l=1}^m \text{Res}_{a_l} \psi_j \right] = \frac{m}{\det A} + m\sum_{j=1}^m \text{Res}_{a_j} \psi_j = \frac{m}{\det A} + mG_{n+1}
\]

\noindent and the $k = n+1$ case follows.


For $1 < k < n+1$, instead use 

\[
\psi_{j,k} = \sum_{i=0}^{n+1-k}\binom{n+1-k}{i} \sum_{\mu} \frac{(-1)^{n_j +\mu_j}}{a_j^{\mu_j + i + k-n + \sum_{l \neq j} \mu_l} \cdot z^{- \sum_{l \neq j} \mu_l}(z - a_j)} \left( \prod_{l \neq j}^m \frac{\binom{n_l+ \mu_l - 1}{\mu_l}}{(a_l - z)^{n_l + \mu_l}}  \right) \; dz
\]

The changed power in the denominator results in poles at $a_1, \dots, a_n$, but never at $z = 0$ or $z = \infty$.  It is again immediate that the contribution of interest is

\[
\sum_{j=1}^m \text{Res}_{a_j} \psi_{j,k} = G_k
\]

\noindent while power series expansion for this range of $k$ shows $\psi_{j,k}$ has the nice property

\[
\text{Res}_{a_j} \psi_{l,k} = \text{Res}_{a_j} \psi_{j,k}
\]

\noindent for all $l \neq j$ (this fails when $k=1$).  As the sum of residues is zero,

\[
0 = \sum_{j=1}^m \left[\sum_{l=1}^m \text{Res}_{a_l} \psi_{j,k} \right] = m\sum_{j=1}^m \text{Res}_{a_j} \psi_{j,k} = mG_k
\]

\noindent and the second part of the lemma is established.

Finally for the $k= 1$ case, use

\[
\psi_j = \sum_{i=0}^n\binom{n+1-k}{i} \sum_{\mu \perp n_j - i- 1} \frac{(-1)^{n_j +\mu_j}}{z^{1 +i - n + \mu_j}(z - a_j)} \left( \prod_{l \neq j}^m \frac{\binom{n_l+ \mu_l - 1}{\mu_l}}{(a_l - z)^{n_l + \mu_l}}  \right) \; dz.
\]

The term of interest is

\[
\sum_{j=1}^m \text{Res}_{a_j} \psi_j = G_1
\]

There is a pole at $z = \infty$ with $\text{Res}_{\infty} \psi_j = (-1)^{n+1}$, and again combinatorial manipulation shows

\[
\text{Res}_{a_j} \psi_j = \text{Res}_{a_j} \psi_l
\]

\noindent for all $l \neq j$.  We have

\[
0 = \sum_{j=1}^m \left[\sum_{l=1}^m \text{Res}_{a_l} \psi_j + \text{Res}_{\infty}\psi_j \right] = m(G_1 + (-1)^{n+1})
\]

\noindent and the final $k = 1$ case follows.

\newpage
\bibliography{refs}

\begin{thebibliography}{10}

\bibitem{arnold:1988}
{\sc Arnold, V.~I.}
\newblock {\em Geometrical Methods in the Theory of Ordinary Differential
  Equations}, 2nd~ed.
\newblock Springer, Berlin, 1988.

\bibitem{aubin:1976}
{\sc Aubin, T.}
\newblock Equations du type de {M}onge-{A}mp{\`e}re sur les vari{\'e}t{\'e}s
  {K}{\"a}hleriennes compactes.
\newblock {\em C. R. Acad. Sci. Paris 283\/} (1976), 119--121.

\bibitem{bott:1967jdg}
{\sc Bott, R.}
\newblock A residue formula for holomorphic vector-fields.
\newblock {\em J. Diff. Geom. 1\/} (1967), 311--330.

\bibitem{bott:1967mich}
{\sc Bott, R.}
\newblock Vector fields and characteristic numbers.
\newblock {\em Michigan Math. J. 14}, 2 (1967), 231--244.

\bibitem{calabi:1985}
{\sc Calabi, E.}
\newblock Extremal {K}{\"a}hler metrics ii.
\newblock In {\em Differential Geometry and Complex Analysis}, I.~Chavel and
  H.~M. Farkas, Eds. Springer, Berlin, 1985, pp.~95--114.

\bibitem{cds:2015}
{\sc Chen, X.~X., Donaldson, S., and Sun, S.}
\newblock {K}{\"a}hler-{E}instein metrics and stability.
\newblock {\em Int. Math. Res. Notices\/} (2015).

\bibitem{cherveny:2016}
{\sc Cherveny, L.}
\newblock Remarks on localizing {F}utaki-{M}orita integrals at isolated
  degenerate zeros.
\newblock {\em Diff. Geom. Appl.\/} (To Appear).

\bibitem{dervanross:2017}
{\sc Dervan, R., and Ross, J.}
\newblock K-stability for {K}{\"a}hler manifolds.
\newblock {\em Math. Res. Lett. 24}, 3 (2017), 689--739.

\bibitem{donaldson:2005}
{\sc Donaldson, S.}
\newblock Lower bounds on the {C}alabi functional.
\newblock {\em J. Diff. Geom. 70}, 3 (2005), 453--472.

\bibitem{dyrefelt:2016}
{\sc Dyrefelt, Z.~S.}
\newblock K-semistability of csc{K} manifolds with transcendental cohomology
  class.
\newblock http://arxiv.org/abs/1601.07659.

\bibitem{futaki:1983}
{\sc Futaki, A.}
\newblock An obstruction to the existence of {E}instein {K}{\"a}hler metrics.
\newblock {\em Invent. Math. 73\/} (1983), 437--443.

\bibitem{futaki:1988}
{\sc Futaki, A.}
\newblock {\em K{\"a}hler-Einstein Metrics and Integral Invariants}, vol.~1314
  of {\em Lect. Notes Math.}
\newblock Springer, 1988.

\bibitem{futakimorita:1985}
{\sc Futaki, A., and Morita, S.}
\newblock Invariant polynomials of the automorphism group of a compact complex
  manifold.
\newblock {\em J. Diff. Geom. 21\/} (1985), 135--142.

\bibitem{griffithsharris}
{\sc Griffiths, P., and Harris, J.}
\newblock {\em Principles of Algebraic Geometry}.
\newblock J Wiley and Sons, New York, 1978.

\bibitem{lishi:2015}
{\sc Li, H., and Shi, Y.}
\newblock The {F}utaki invariant on the blowup of {K}{\"a}hler surfaces.
\newblock {\em Intern. Math. Res. Notices}, 7 (2015), 1902--1923.

\bibitem{szekelyhidibook}
{\sc Sz{\'e}kelyhidi, G.}
\newblock {\em An Introduction to Extremal {K}\"ahler Metrics}, vol.~152 of
  {\em Graduate Studies in Mathematics}.
\newblock Amer. Math. Soc., 2014.

\bibitem{szekelyhidi:2015}
{\sc Sz{\'e}kelyhidi, G.}
\newblock Blowing up extremal {K}{\"a}hler manifolds {II}.
\newblock {\em Invent. Math. 200}, 3 (2015), 925--977.

\bibitem{tian:1996}
{\sc Tian, G.}
\newblock K{\"a}hler-{E}instein metrics on algebraic manifolds.
\newblock In {\em Transcendental Methods in Algebraic Geometry\/} (1996),
  F.~Catanese and C.~Ciliberto, Eds., Springer.

\bibitem{tian:2000}
{\sc Tian, G.}
\newblock {\em Canonical Metrics in {K}{\"a}hler Geomtry}.
\newblock Lectures in Mathematics ETH Z\"urich. Birkh{\"a}user, 2000.

\bibitem{yau:1978}
{\sc Yau, S.-T.}
\newblock On the {R}icci curvature of a compact {K}{\"a}hler manifold and the
  complex {M}onge-{A}mp{\`e}re equation {I}.
\newblock {\em Comm. Pure Appl. Math. 31\/} (1978), 339--411.

\end{thebibliography}
\bibliographystyle{acm}

\end{document}